\theoremstyle{plain}
\newtheorem{theorem}{Theorem}
\newtheorem{lemma}{Lemma}
\newtheorem{proposition}{Proposition}
\newtheorem{conjecture}{Conjecture}
\newtheorem{remark}{Remark}
\numberwithin{equation}{section}
\numberwithin{lemma}{section}
\numberwithin{proposition}{section}
\numberwithin{corollary}{section}
\numberwithin{remark}{section}
\crefname{Theorem}{Theorem}{Theorem}
\crefname{lemma}{Lemma}{Lemma}
\crefname{proposition}{Proposition}{Proposition}
\crefname{conjecture}{conjecture}{conjecture}
\crefname{equation}{}{}
\crefname{section}{Section}{Section}
\crefname{figure}{Figure}{Figure}
\begin{document}

\title{A note on Wang's conjecture for harmonic functions with nonlinear boundary condition}
\author{Xiaohan Cai}
\address{School of Mathematical Sciences, Shanghai Jiao Tong University}
\email{xiaohancai@sjtu.edu.cn}

\thanks{}
\date{}

\begin{abstract}
We obtain some Liouville type theorems for positive harmonic functions on compact Riemannian manifolds with nonnegative Ricci curvature and strictly convex boundary and partially verifies Wang's conjecture (J. Geom. Anal. \textbf{31} (2021)). 

For the specific manifold $\mathbb{B}^n$, we present a new proof of this conjecture, which has been resolved by Gu-Li (Math. Ann. \textbf{391} (2025)). Our proof is based on a general principle of applying the P-function method to such Liouville type results. As a further application of this method, we obtain some classification results for nonnegative solutions of some semilinear elliptic equations  with  a nonlinear boundary condition.
\end{abstract}
\maketitle
\section{Introduction}

Wang proposed a conjecture for Liouville type result for harmonic functions with some specific nonlinear boundary condition \cite[Conjecture 1]{Wan21}.  
\begin{conjecture}[Wang, \cite{Wan21}]\label{conj. Wang}
    Let $(M^n,g)\ (n\geq 3)$ be a compact Riemannian manifold with $\mathrm{Ric}\geq 0$ and the second fundamental form $\Pi\geq 1$ on $\partial M$. If $u\in C^{\infty}(M)$ is a positive solution of the following equation
    \begin{align}\label{eq. equation of u on M}
        \begin{cases}
            \Delta u=0  &\text{in }M,\\
            \frac{\partial u}{\partial \nu}+\lambda u=u^q &\text{on }\partial M,
        \end{cases}
    \end{align}
    where $1<q\leq \frac{n}{n-2}$ and $0<\lambda\leq \frac{1}{q-1}$ are constants. Then either $u$ is a constant function, or $q=\frac{n}{n-2},\, \lambda=\frac{n-2}{2}$, $(M^n,g)$ is isometric to $\mathbb{B}^n\subset\mathbb{R}^n$ and $u$ corresponds to
    \begin{align*}
            u(x)=\left(
            \frac{n-2}{2}\frac{1-|a|^2}{|a|^2|x|^2-2\langle x,a\rangle+1}
            \right)^{\frac{n-2}{2}},
        \end{align*}
        where $a\in \mathbb{B}^n$.
\end{conjecture}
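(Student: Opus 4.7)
The plan is to attack the conjecture with a P-function (auxiliary-function) argument applied to a suitable power substitution of $u$, relying on the Bochner--Weitzenböck identity in the interior and Hopf's boundary-point lemma on $\partial M$.

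\textbf{Step 1 (substitution).} Since $u>0$, I would set $v:=u^{1-q}$. A direct computation using $\Delta u=0$ gives
\[
v\,\Delta v \;=\; \tfrac{q}{q-1}\,|\nabla v|^2 \quad \text{in } M,
\]
while the nonlinear Robin condition on $u$ transforms into the \emph{linear} Robin condition
\[
\frac{\partial v}{\partial\nu} \;=\; (q-1)(\lambda v - 1) \quad \text{on } \partial M.
\]
Constant solutions correspond exactly to $v\equiv 1/\lambda$, and the explicit bubble in the conclusion corresponds to a quadratic function on $\mathbb{B}^n$ whose Hessian is a constant multiple of the Euclidean metric.

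\textbf{Step 2 (Bochner estimate and P-function).} I would apply the Bochner formula to $|\nabla v|^2$, substitute the interior equation for $\Delta v$, discard the term $\mathrm{Ric}(\nabla v,\nabla v)\geq 0$, and invoke the Kato-type inequality $|\nabla^2 v|^2\geq (\Delta v)^2/n$. The outcome should be a drift-type elliptic inequality
\[
\Delta P \;-\; \frac{C_1}{v}\,\langle\nabla v,\nabla P\rangle \;\geq\; 0 \quad \text{in } M
\]
for a P-function of the shape
\[
P \;=\; \frac{|\nabla v|^2}{v^{a}} \;-\; c\,v^{b},
\]
with exponents $a,b$ and constant $c>0$ chosen so that the nonlinear terms from Bochner cancel; in the critical case $q=\tfrac{n}{n-2}$ the natural choice is $a=b=1$. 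On $\partial M$ I would compute
\[
\tfrac12\,\partial_\nu|\nabla v|^2 \;=\; \nabla^2 v(\nu,\nu)\,\partial_\nu v \;+\; (q-1)\lambda|\nabla^T v|^2 \;-\; \Pi(\nabla^T v,\nabla^T v),
\]
using $\nabla^T(\partial_\nu v)=(q-1)\lambda\nabla^T v$ from the linear boundary equation, eliminate $\nabla^2 v(\nu,\nu)$ via the interior equation, and invoke $\Pi\geq 1$ together with $\lambda\leq 1/(q-1)$ to produce $\partial_\nu P\leq 0$ on $\partial M$. The strong maximum principle and Hopf's boundary-point lemma then force $P\equiv\mathrm{const}$ on $M$.

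\textbf{Step 3 (rigidity).} With $P$ constant, every inequality in the Bochner chain collapses to equality: $\mathrm{Ric}(\nabla v,\nabla v)\equiv 0$ in $M$, $\Pi(\nabla^T v,\nabla^T v)\equiv |\nabla^T v|^2$ on $\partial M$, and the Kato inequality is saturated so that
\[
\nabla^2 v \;=\; \frac{|\nabla v|^2}{2v}\,g \quad \text{on } M.
\]
Taking the divergence of this Obata-type equation and using $\mathrm{Ric}(\nabla v,\cdot)=0$ forces the proportionality factor to be a constant, whence $\nabla^2 v$ is a constant multiple of $g$. A standard Tashiro/Obata-type classification then identifies $(M,g)$ with $\mathbb{B}^n$ equipped with its flat metric, and $\Pi\equiv g|_{\partial M}$ is forced along $\partial M$ by the boundary equality. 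Integrating the constant-Hessian equation in Euclidean coordinates yields $v(x)=C(|a|^2|x|^2-2\langle x,a\rangle+1)$ for some $a\in\mathbb{B}^n$ and $C>0$ pinned down by the linear boundary condition; converting back via $u=v^{1/(1-q)}$ recovers the bubble in the statement. If instead $\nabla v\equiv 0$, then $u\equiv\mathrm{const}$.

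\textbf{Main obstacle.} The crux is Step~2: pinning down the exponents $(a,b)$ and the constant $c$ so that the interior inequality closes under $\mathrm{Ric}\geq 0$ alone \emph{and} the boundary inequality closes under $\Pi\geq 1$ and the full range $0<\lambda\leq 1/(q-1)$. In the critical case $q=n/(n-2)$ conformal invariance closes the algebra cleanly, and on $M=\mathbb{B}^n$ one can additionally exploit Euclidean coordinates to carry through the final classification directly — this is why the abstract advertises a complete new proof of the conjecture on $\mathbb{B}^n$ but only a partial verification on general manifolds, where the subcritical regime seems to require extra hypotheses on $q$ or $\lambda$.
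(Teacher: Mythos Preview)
The statement you are attempting is a \emph{conjecture}; the paper does not prove it in full. The paper proves only partial cases: Theorem~\ref{thm. C. Ricci} covers restricted ranges of $n,q,\lambda$ on general manifolds via weighted integration by parts (not a pointwise maximum principle), and Theorem~\ref{thm. GL25} settles the full parameter range only on the specific manifold $\mathbb{B}^n$, again by integral identities exploiting the Euclidean weight $w=\tfrac12(1-|x|^2)$. You are right that Step~2 is the crux and that it does not close in the subcritical regime on a general $(M,g)$; this is precisely why the conjecture remains open.

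Beyond that acknowledged obstacle, your pointwise scheme has a further gap at the boundary. To compute $\partial_\nu P$ you must express $\nabla^2 v(\nu,\nu)$ through the interior equation, which introduces the tangential Laplacian $\Delta^{\partial M} f$ (since $\Delta v=\Delta^{\partial M}f+H\chi+\nabla^2 v(\nu,\nu)$). This second-order boundary term has no sign and cannot be controlled pointwise by $\Pi\geq 1$ and $\lambda\leq 1/(q-1)$ alone; the paper absorbs it only after integrating by parts on $\partial M$ (see \cref{eq. LHS of 1}). So a genuine Hopf-lemma argument with $\partial_\nu P\leq 0$ pointwise is not available here, and the paper's route---multiply Bochner by $v^b$, integrate, and combine with a separate integrated consequence of the equation (\cref{eq. equation 2}) to kill the uncontrolled $\int v^{b-1}\langle\nabla_{\nabla v}\nabla v,\nabla v\rangle$ term---is essential rather than cosmetic. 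Even on $\mathbb{B}^n$ the paper's proof is an integral-identity argument using the conformal vector field $\nabla w$, not a pointwise maximum principle.
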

This conjecture, if proved to be true, have several interesting geometric consequences, such as a sharp upper bound of the area of the boundary and a sharp lower bound of Steklov eigenvalue on such manifolds. See \cite[Section 2]{Wan21} and also \cite[Section 5]{GHW21} for detailed discussions.

Inspired by Xia-Xiong's work on Steklov eigenvalue estimate \cite{XX24}, Guo-Hang-Wang \cite[Theorem 2]{GHW21} verified \cref{conj. Wang} for some special cases under nonnegative sectional curvature condition:
\begin{theorem}[Guo-Hang-Wang, \cite{GHW21}]\label{thm. GHW21}
    Let $(M^n,g)\ (n\geq 3)$ be a compact Riemannian manifold with $\mathrm{Sec}\geq 0$ and $\Pi\geq 1$ on $\partial M$. Then the only positive solution to \cref{eq. equation of u on M} is constant provided $3\leq n\leq 8$ and 
    $1<q\leq \frac{4n}{5n-9}$, $0<\lambda\leq \frac{1}{q-1}$.
\end{theorem}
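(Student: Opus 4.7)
The plan is to adapt the P-function method, used by Xia--Xiong for Steklov eigenvalue estimates, to this nonlinear boundary problem. The first step is to introduce an auxiliary function $v = u^{\alpha}$ for a parameter $\alpha$ to be tuned later. Since $\Delta u = 0$, a direct computation gives
\[
\Delta v \;=\; \frac{\alpha - 1}{\alpha}\,\frac{|\nabla v|^2}{v} \qquad \text{in } M,
\]
while on $\partial M$ the boundary condition for $u$ becomes $\partial_\nu v = \alpha v^{(\alpha + q - 1)/\alpha} - \alpha\lambda v$. With $\alpha$ appropriately chosen, both the interior ``$p$-Laplace--type'' equation and the boundary exponent are put into a form amenable to a single Bochner--Reilly argument.

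Next, I would introduce a $P$-function of the form $P = |\nabla v|^2 + F(v)$, or a weighted variant such as $v^{-\gamma}|\nabla v|^2$, and apply the Bochner identity
\[
\tfrac{1}{2}\Delta|\nabla v|^2 \;=\; |\nabla^2 v|^2 + \langle \nabla v,\nabla \Delta v\rangle + \mathrm{Ric}(\nabla v,\nabla v).
\]
Using $\mathrm{Sec}\geq 0$ (hence $\mathrm{Ric}\geq 0$), the identity for $\Delta v$ above, and a refined Kato inequality which bounds $|\nabla^2 v|^2$ below by $|\nabla|\nabla v||^2$ with a dimension-dependent improvement, one expects to derive a differential inequality of the form $\Delta P \geq \text{(nonnegative)}$ for a specific algebraic choice of $F$, valid under explicit restrictions on $n$ and $q$. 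The role of Sec $\geq 0$ (rather than merely Ric $\geq 0$) is to allow commuting covariant derivatives when handling cross terms in $\nabla P$.

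Then I would integrate this interior inequality over $M$ and use the divergence theorem to convert the result into boundary terms. On $\partial M$, a Reilly-type computation combined with $\Pi\geq 1$ and the boundary equation for $v$ should produce boundary integrals with a definite sign, the hypothesis $\lambda \leq 1/(q-1)$ ensuring the balance between the Robin term $\lambda u$ and the nonlinearity $u^q$. Collecting everything, one concludes that $P$ is constant, and the equality cases of refined Kato and Cauchy--Schwarz then force $|\nabla v|\equiv 0$, so $u$ is constant. I expect the main obstacle to be the algebraic compatibility of the interior and boundary inequalities: the constraints $3\leq n\leq 8$ and $1 < q\leq 4n/(5n-9)$ must drop out of a single delicate optimization over $\alpha$ (and over the ansatz $F$) which simultaneously controls the refined Kato defect, the sign of $\langle \nabla v,\nabla\Delta v\rangle$, and the boundary contribution coming from $\Pi\geq 1$. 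Tracking the full coupling, and in particular verifying that the worst pointwise terms can be absorbed precisely in the claimed numerical range, is the delicate part of the calculation.
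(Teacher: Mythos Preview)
This theorem is \emph{not} proved in the present paper: it is quoted from \cite{GHW21} as background, and the paper's own contribution is the related \cref{thm. C. Ricci} under the weaker hypothesis $\mathrm{Ric}\geq 0$. So there is no ``paper's proof'' to compare against directly. The closest analogue is the proof of \cref{thm. C. Ricci} in \cref{sec2}, and against that your sketch diverges in two substantive ways.

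First, your explanation of why $\mathrm{Sec}\geq 0$ is needed---``to allow commuting covariant derivatives when handling cross terms in $\nabla P$''---is not correct. Commutation of covariant derivatives is governed by the full curvature tensor regardless of sign, and the Ricci term in Bochner already absorbs that. In fact the paper shows (Theorem~\ref{thm. C. Ricci}) that a version of the result holds with only $\mathrm{Ric}\geq 0$, so sectional curvature is not playing the structural role you describe. In the original \cite{GHW21} argument, $\mathrm{Sec}\geq 0$ enters through a more specific mechanism (Xia--Xiong--type identities), not through commutation.

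Second, your plan is to prove a pointwise differential inequality $\Delta P\geq 0$ and then integrate. That is not how the argument in \cref{sec2} (nor, as far as one can infer, the argument in \cite{GHW21}) runs: instead one multiplies the Bochner identity by a weight $v^b$, integrates, and carefully balances interior bulk terms against boundary terms produced by the Reilly decomposition and the condition $\Pi\geq 1$. There is no subharmonic $P$-function and no maximum principle step; the numerical constraints on $n,q,\lambda$ arise from requiring several integrated coefficients to have the right sign simultaneously (see \cref{eq. Final ineq.}). Your outline also invokes a ``refined Kato inequality,'' which does not appear in the paper's computation; the only pointwise splitting used is $|\nabla^2 v|^2=|\nabla^2 v-\tfrac{\Delta v}{n}g|^2+\tfrac{1}{n}(\Delta v)^2$. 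As written, your proposal is a plausible heuristic but does not yet isolate the actual mechanism, and the specific ranges $3\le n\le 8$, $q\le 4n/(5n-9)$ will not fall out of it without the weighted-integral bookkeeping.
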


In the first part of this note, we shall partially confirm \cref{conj. Wang} under Ricci curvature condition via integration by parts and meticulously choosing the parameters.
\begin{theorem}\label{thm. C. Ricci}
    Let $(M^n,g)\ (n\geq 3)$ be a compact Riemannian manifold with $\mathrm{Ric}\geq 0$ and $\Pi\geq 1$ on $\partial M$. Then the only positive solution to \cref{eq. equation of u on M} is constant provided one of the following two conditions holds:
    \begin{enumerate}
        \item $3\leq n\leq 7$, $1<q\leq \frac{3n}{4(n-2)}$, and $0<\lambda\leq \min\{\frac{1}{2(q-1)}, \frac{3(n-1)}{2q}\}$\\
        \item $3\leq n\leq 9$, $1<q\leq \frac{n+1+\sqrt{(5n-1)(n-1)}}{4(n-2)}$
        and $0<\lambda\leq \min\{\frac{1}{2(q-1)},\frac{6q+1}{2q+1}\frac{n-1}{2q}\}$
    \end{enumerate}
\end{theorem}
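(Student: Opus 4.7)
My plan is to derive a single weighted integral inequality by combining the Bochner identity with a Reilly-type decomposition of boundary terms, and then to optimize over an auxiliary weight $u^\gamma$ to cover the ranges claimed. I start from Bochner applied to the harmonic function $u$,
\[
\tfrac{1}{2}\Delta|\nabla u|^2 = |\nabla^2 u|^2 + \mathrm{Ric}(\nabla u,\nabla u),
\]
together with the refined Kato inequality $|\nabla^2 u|^2 \ge \tfrac{n}{n-1}\bigl|\nabla|\nabla u|\bigr|^2$ valid for harmonic functions. Multiplying by $u^\gamma$ (with $\gamma\in\R$ to be chosen) and integrating by parts twice over $M$ yields an identity whose bulk part is either nonnegative under $\mathrm{Ric}\ge 0$ (combined with Kato) or reduces to $\int_M u^{\gamma-2}|\nabla u|^4\,dV$ and $\int_M u^{\gamma-1}|\nabla u|^2\,dV$, while the boundary part concentrates into $\tfrac{1}{2}\int_{\partial M} u^\gamma\,\partial_\nu|\nabla u|^2\,dA$ plus controlled lower-order contributions.

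To process the boundary term I decompose $\nabla u = \nabla^T u + (\partial_\nu u)\nu$ on $\partial M$ and use the standard identities
\[
\nabla^2 u(\nu,\nu) = -\bar\Delta u - H\,\partial_\nu u, \qquad \nabla^2 u(X,\nu) = X(\partial_\nu u) - \Pi(X,\nabla^T u),
\]
with $\bar\Delta$ the induced Laplacian on $\partial M$, so that $\partial_\nu|\nabla u|^2 = 2\nabla^2 u(\nabla u,\nu)$ becomes a polynomial in $u$, $|\nabla^T u|^2$, $H$, and boundary derivatives of $\partial_\nu u$. Substituting $\partial_\nu u = u^q - \lambda u$, together with $\Pi\ge 1$, gives $\Pi(\nabla^T u,\nabla^T u)\ge |\nabla^T u|^2 = |\nabla u|^2 - (u^q-\lambda u)^2$ and $H\ge n-1$, reducing every boundary integrand to a polynomial in $u$. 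The remaining bulk integral $\int_M u^{\gamma-1}|\nabla u|^2\,dV$ is eliminated by pairing with the elementary identity
\[
\gamma\int_M u^{\gamma-1}|\nabla u|^2\,dV = \int_{\partial M} u^\gamma(u^q-\lambda u)\,dA
\]
obtained by testing $\Delta u = 0$ against $u^\gamma$.

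After discarding all nonnegative bulk contributions, the analysis should collapse to an integrated inequality on $\partial M$ of the schematic form
\[
0 \;\ge\; \int_{\partial M}\!\bigl(A(n,q,\lambda,\gamma)\,u^{2q+\gamma-1} + B(n,q,\lambda,\gamma)\,u^{q+\gamma} + C(n,q,\lambda,\gamma)\,u^{\gamma+1}\bigr)\,dA.
\]
The main obstacle---the ``meticulous choice of parameters'' alluded to in the abstract---is then the purely algebraic optimization of selecting $\gamma = \gamma(n,q,\lambda)$ so that $A,B,C$ simultaneously acquire signs incompatible with $u>0$ unless the discarded bulk inequalities are all equalities. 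I expect the two cases of the theorem to correspond to two distinct admissible choices of $\gamma$: a simpler one producing the bound $q\le \tfrac{3n}{4(n-2)}$ in dimensions $3\le n\le 7$, and a more refined one extending the dimension range to $n=9$ at the cost of the stricter $\lambda$-bound $\lambda\le \tfrac{6q+1}{2q+1}\cdot\tfrac{n-1}{2q}$, with the sharper threshold $q\le \tfrac{n+1+\sqrt{(5n-1)(n-1)}}{4(n-2)}$ surfacing precisely as the positive root of the quadratic in $q$ coming from $A\ge 0$. In the equality case, a standard rigidity analysis of refined Kato, of $\mathrm{Ric}(\nabla u,\nabla u) = 0$, and of $\Pi\ge 1$ along $\nabla^T u$ forces $\nabla u\equiv 0$, so $u$ must be constant.
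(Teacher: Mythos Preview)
Your outline has the right flavor---Bochner plus boundary decomposition plus weight optimization---but there are two genuine gaps.

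First, the claim that the boundary contribution collapses to a polynomial in $u$ alone is not correct. After expanding $\partial_\nu|\nabla u|^2=2\nabla^2u(\nabla u,\nu)$ and inserting $\partial_\nu u=u^q-\lambda u$, the pieces $\langle\nabla^T u,\nabla^T(\partial_\nu u)\rangle$, $-\Pi(\nabla^T u,\nabla^T u)$, and the integration by parts of $(\partial_\nu u)\,\bar\Delta u$ all leave residual integrands of the form $[\text{polynomial in }u]\cdot|\nabla^T u|^2$. These appear with \emph{two} distinct powers of $u$ (essentially $u^\gamma$ and $u^{\gamma+q-1}$) and cannot be absorbed into a single-signed expression valid for all $u>0$. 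In the paper's calculation these are the $\int_{\partial M}f^b|\nabla f|^2$ and $\int_{\partial M}f^{b+a-aq}|\nabla f|^2$ terms in \cref{eq. Final ineq.}; one must be annihilated outright by a parameter choice before the other can be signed via the $\lambda$-hypothesis.

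Second---and this is what forces the extra parameters---working directly with the harmonic $u$ and a single weight $u^\gamma$ gives too few degrees of freedom. The paper first substitutes $v=u^{-1/a}$, so that $\Delta v=(a+1)v^{-1}|\nabla v|^2$ is nonzero; this both introduces the new parameter $a$ and unlocks a second, independent integral identity by expanding $\int_M v^b(\Delta v)^2$ (Step~2 of the proof). For harmonic $u$ your analogue of that identity is vacuously $0=0$. The resulting master inequality \cref{eq. Final ineq.} then carries parameters $(a,b,c)$, equivalently $(\beta,x)$ after eliminating the cross term $\int_M v^{b-1}\langle\nabla_{\nabla v}\nabla v,\nabla v\rangle$, and the two cases of the theorem are precisely the choices $\beta=1$ and $\beta=\tfrac{4q+2}{4q+1}$; the threshold $q\le\tfrac{n+1+\sqrt{(5n-1)(n-1)}}{4(n-2)}$ is the positive root of the bulk coefficient $-4(n-2)q^2+2(n+1)q+n$, which only materializes at the second value of $\beta$. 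With a single $\gamma$ you cannot simultaneously (i) kill the bad tangential boundary term, (ii) sign the remaining tangential term, and (iii) keep the bulk $|\nabla u|^4$ coefficient nonnegative, so the specific ranges in the statement are out of reach of your scheme as written.
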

\begin{remark}
    Let us compare the ranges of $q$ and $\lambda$ in \cref{thm. C. Ricci} with those in \cref{thm. GHW21}.
    \begin{itemize}
    \item Condition (2) in \cref{thm. C. Ricci} allows one more dimension $n=9$ compared with \cref{thm. GHW21}.
        \item \cref{thm. C. Ricci} yields a larger range of $q$ than \cref{thm. GHW21} in some dimensions. Explicitly, we have   $\frac{n+1+\sqrt{(5n-1)(n-1)}}{4(n-2)}\geq \frac{3n}{4(n-1)}\geq \frac{4n}{5n-9}$ if  $3\leq n\leq 5$, and $\frac{n+1+\sqrt{(5n-1)(n-1)}}{4(n-2)}
        \geq \frac{4n}{5n-9}
        \geq \frac{3n}{4(n-1)}$ if $6\leq n\leq 9$.
        \item The range of $\lambda$ in \cref{thm. C. Ricci} is not sharp. Moreover, condition (2) doesn't cover condition (1) since $\frac{6q+1}{2q+1}\frac{n-1}{2 q}<\frac{3(n-1)}{2q}$.
    \end{itemize}
\end{remark}
\vspace{1em}

	Except for studying it within some special ranges of parameters,  another way toward \cref{conj. Wang} is to confine ourselves to some specific manifolds. Guo and Wang  proposed such an individual conjecture on the model space $\mathbb{B}^n$  \cite[Conjecture 1]{GW20}:
    \begin{conjecture}[Guo-Wang, \cite{GW20}]\label{conj.Guo-Wang}
        If $u\in C^{\infty}(\mathbb{B}^n)$ is a positive solution of the following equation
        \begin{equation}\label{eq. harmonic}
            \begin{cases}
                \Delta u=0 &\mathrm{ in} \ \mathbb{B}^n,\\
                \frac{\partial u}{\partial\nu}+\lambda u=u^q &\mathrm{on}\ \mathbb{S}^{n-1},
            \end{cases}
        \end{equation}
        where $1<q\leq\frac{n}{n-2}$ and $0<\lambda \leq \frac{1}{q-1}$ are constants. Then either $u$ is a constant function, or $q=\frac{n}{n-2},\, \lambda=\frac{n-2}{2}$ and $u$ corresponds to
    \begin{align}\label{eq. model solution}
            u(x)=\left(
            \frac{n-2}{2}\frac{1-|a|^2}{|a|^2|x|^2-2\langle x,a\rangle+1}
            \right)^{\frac{n-2}{2}},
        \end{align}
        where $a\in \mathbb{B}^n$.
    \end{conjecture}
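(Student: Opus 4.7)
The plan is to apply the P-function method to \cref{conj.Guo-Wang}. I would introduce an auxiliary function of the form
\[
P \;=\; |\nabla u|^{2} - \Phi(u),
\]
where $\Phi$ is a polynomial in $u$, most naturally a combination of $u^{2q}$, $u^{q+1}$ and $u^{2}$ with coefficients depending on $n,q,\lambda$. The choice of $\Phi$ is dictated by the expansion $u_{\nu}^{2} = (u^{q}-\lambda u)^{2} = u^{2q} - 2\lambda u^{q+1} + \lambda^{2} u^{2}$ forced by the boundary condition, so that on $\mathbb{S}^{n-1}$ the boundary contribution of $P$ splits cleanly into a tangential part and a $u$-polynomial part whose coefficients can be tuned.

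For the interior inequality I would use Bochner's formula together with $\Delta u = 0$ to obtain $\Delta|\nabla u|^{2} = 2|\nabla^{2} u|^{2}$, hence
\[
\Delta P \;=\; 2|\nabla^{2} u|^{2} - \Phi''(u)|\nabla u|^{2}.
\]
Applying the refined Kato inequality for harmonic functions, $|\nabla^{2} u|^{2} \geq \tfrac{n}{n-1}|\nabla|\nabla u||^{2}$, and possibly passing to a quotient such as $P/u^{k}$, I would aim to obtain an elliptic inequality $LP \geq 0$ in $\mathbb{B}^{n}$ for a linear operator $L$ with (possibly) a drift term in $\nabla u$. On the boundary $\mathbb{S}^{n-1}$ one has $\nu = x$ and $\Pi = g$; tangentially differentiating the Neumann condition gives $\nabla^{T} u_{\nu} = (qu^{q-1}-\lambda)\nabla^{T} u$, and the standard commutator identity yields $\nabla^{2} u(e_{\alpha},\nu) = (qu^{q-1}-\lambda-1)\,e_{\alpha}u$, while $u_{\nu\nu} = -\Delta_{\partial\mathbb{B}^{n}} u - (n-1) u_{\nu}$ follows from $\Delta u = 0$ restricted to the boundary. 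Substituting, $\partial_{\nu} P$ becomes an algebraic expression in $u$, $|\nabla^{T} u|^{2}$ and $\Delta_{\partial} u$, and the constants in $\Phi$ can be chosen so that $\partial_{\nu} P \leq 0$ pointwise on $\mathbb{S}^{n-1}$.

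Having set this up, the maximum principle together with Hopf's lemma force $P$ to be constant, and tracing back through the equalities forces equality in Kato, which combined with $\Delta u = 0$ gives $\nabla^{2} u \equiv 0$; since $u > 0$ on $\mathbb{B}^{n}$ this yields $u \equiv \mathrm{const}$, except precisely in the critical case $q = \tfrac{n}{n-2}$, $\lambda = \tfrac{n-2}{2}$, where $\Phi$ is tuned so that the bubble solutions \cref{eq. model solution} saturate $P \equiv \mathrm{const}$. In that case I would invoke the conformal invariance of the critical boundary problem under the Möbius group of $\mathbb{B}^{n}$ (the critical $\lambda$ and $q$ are exactly those for which the problem corresponds to prescribing zero scalar curvature in $\mathbb{B}^n$ with constant boundary mean curvature under a conformal change) to identify the full extremal family as the bubbles. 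The principal obstacle is choosing the coefficients of $\Phi$ so that \emph{both} the interior inequality $LP \geq 0$ and the boundary inequality $\partial_{\nu} P \leq 0$ hold across the entire parameter range $1 < q \leq \tfrac{n}{n-2}$, $0 < \lambda \leq \tfrac{1}{q-1}$, with equality characterizing exactly the bubble family at the threshold: these two algebraic constraints on the coefficients of $\Phi$ pull in opposite directions, and the stated upper bounds on $q$ and $\lambda$ should emerge as precisely the window in which a simultaneous choice is possible.
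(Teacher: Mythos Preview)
Your plan has a genuine gap. The P-function $P=|\nabla u|^{2}-\Phi(u)$ with a fixed polynomial $\Phi$ is not adapted to the problem: for a bubble \cref{eq. model solution} with parameter $a\neq 0$ one finds $|\nabla u|^{2}=C(a)\,u^{q+1}$ with $C(a)=\tfrac{2(n-2)|a|^{2}}{1-|a|^{2}}$ genuinely depending on $|a|$, so no single $\Phi$ makes $P$ constant across the whole family. The correct P-function is the \emph{ratio} $v^{-1}|\nabla v|^{2}$ for $v=u^{-2/(d-2)}$, where the paper introduces the ``intrinsic dimension'' $d=\tfrac{2q}{q-1}\ge n$ so that the subcritical equation becomes formally critical in dimension $d$; this forces a weighted divergence structure (weight $v^{1-d}$) and an integral argument rather than a pointwise maximum principle. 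Relatedly, your rigidity chain ``$P$ constant $\Rightarrow$ equality in Kato $\Rightarrow$ $\nabla^{2}u\equiv 0$'' cannot be correct as stated, since the nonconstant bubbles are harmonic and non-affine yet must survive the equality analysis in the critical regime; you give no mechanism that distinguishes subcritical from critical at this step.

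The decisive ingredient your scheme is missing is the weight $w=\tfrac{1}{2}(1-|x|^{2})$, whose gradient is a closed conformal vector field on $\mathbb{B}^{n}$. After integrating the Bochner-type identity \cref{eq. div 1_1-d}, the paper is left with boundary terms $\int_{\mathbb{S}^{n-1}}f^{k}\bigl(\nabla^{2}v-\tfrac{\Delta v}{d}g\bigr)(\nu,\nu)$ that carry no obvious sign; these are converted into signed bulk integrals by a Pohozaev-type identity obtained from \cref{cor. div 1w} applied with this $w$ (see \cref{eq. 1w_2-d}, \cref{eq. 1w_1-d}). This step is specific to the ball and is precisely what upgrades the partial ranges obtainable from Bochner-plus-boundary alone (as in \cref{thm. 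GHW21} or \cref{thm. C. Ricci}) to the full range in \cref{thm. GL25}. Your proposal contains no analogue of it, and without it there is no reason to expect a simultaneous choice of $\Phi$ making both $LP\ge 0$ in $\mathbb{B}^{n}$ and $\partial_{\nu}P\le 0$ on $\mathbb{S}^{n-1}$ across the whole range $1<q\le\tfrac{n}{n-2}$, $0<\lambda\le\tfrac{1}{q-1}$.
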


    Historically, Escobar \cite[Theorem 2.1]{Esc90} (see also \cite{Esc88}) classified all solution of \cref{eq. harmonic} by Obata's method \cite{Oba71} when $q=\frac{n}{n-2}$ and $\lambda=\frac{n-2}{2}$. After several works in this field \cite{GW20,GHW21,LO23,Ou24}, Gu-Li \cite[Theorem 1.1]{GL25} finally give an affirmative answer to \cref{conj.Guo-Wang}.

    \begin{theorem}[Gu-Li, \cite{GL25}]\label{thm. GL25}
        If $u\in C^{\infty}(\mathbb{B}^n)$ is a positive solution of the  equation
        \cref{eq. harmonic}
        for some constants $1<q<\frac{n}{n-2}$ and $0<\lambda \leq \frac{1}{q-1}$, then $u$ is a constant function.
    \end{theorem}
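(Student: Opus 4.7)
The plan is to give a pointwise P-function proof, in the spirit of the general principle the author announces for such Liouville-type results, contrasting with the integration-by-parts argument used for \cref{thm. C. Ricci}. I would define an auxiliary function
\begin{equation*}
P \;=\; u^{-2\beta}\,|\nabla u|^{2}
\end{equation*}
on $\overline{\mathbb{B}^n}$, where the exponent $\beta=\beta(n,q,\lambda)$ is a parameter to be chosen. The first step is interior: since $\mathbb{B}^n$ is flat and $\Delta u=0$, Bochner's identity reduces to $\Delta|\nabla u|^{2}=2|\nabla^{2}u|^{2}$, and combined with the refined Kato inequality $|\nabla^{2} u|^{2}\ge \frac{n}{n-1}|\nabla|\nabla u||^{2}$ valid for harmonic functions, the goal is to derive an elliptic inequality
\begin{equation*}
\Delta P + \langle X,\nabla P\rangle \;\ge\; 0 \qquad\text{in } \mathbb{B}^n
\end{equation*}
for some smooth vector field $X=X(u,\nabla u)$; the strong maximum principle then pushes the maximum of $P$ onto the boundary.

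The second step is on $\mathbb{S}^{n-1}$. Split $|\nabla u|^{2}=u_{\nu}^{2}+|\nabla^{T}u|^{2}$ and use $\Pi=g|_{T\partial M}$, $H=n-1$, the boundary condition $u_{\nu}=u^{q}-\lambda u$ and its tangential derivative $\nabla^{T}u_{\nu}=(qu^{q-1}-\lambda)\nabla^{T}u$, together with $u_{\nu\nu}=-(n-1)u_{\nu}-\Delta_{\partial M} u$, to reduce $\partial_\nu P$ on $\mathbb{S}^{n-1}$ to an explicit function of $u$ multiplying $|\nabla^{T}u|^{2}$ (the terms of pure powers of $u$ from $u_{\nu}u_{\nu\nu}$ cancel for an appropriate $\beta$). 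Then one chooses $\beta$ so that
\begin{equation*}
\partial_{\nu} P \;\le\; -A(u)\,|\nabla^{T}u|^{2}, \qquad A(u)\ge 0,
\end{equation*}
throughout the range $1<q<\tfrac{n}{n-2}$, $0<\lambda\le \tfrac{1}{q-1}$. Hopf's boundary-point lemma then excludes a boundary maximum with $|\nabla^{T}u|\ne 0$; combined with Step 1 this forces $P\equiv\mathrm{const}$, and chasing the equality case of the Kato-type inequality back through the Bochner identity forces $u$ to be constant.

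The main obstacle is precisely this simultaneous balancing: selecting one exponent $\beta$ (or, if needed, an enlarged ansatz $P=u^{-2\beta}|\nabla u|^{2}+Cu^{2\sigma}$) that produces both the interior drift-subharmonicity and the correct boundary sign across the full admissible region in $(q,\lambda)$. The strict inequality $q<\frac{n}{n-2}$ should enter at exactly the point where the borderline $q=\frac{n}{n-2}$, $\lambda=\frac{n-2}{2}$ saturates the Kato-type bound and recovers Escobar's classification; this balancing is also the computationally delicate part, since the boundary calculation mixes powers $u^{q-1}$ and $u^{2q-2}$ with $\beta$ in a nontrivial algebraic way.
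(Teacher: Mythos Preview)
Your interior step is essentially correct and coincides with what the paper does: with the specific choice $P=v^{-1}|\nabla v|^2$ for $v=u^{-(q-1)/q}$ (equivalently $2\beta=\tfrac{3q-1}{q}$ in your notation), one obtains the drift inequality $\Delta P+(2-d)v^{-1}\langle\nabla v,\nabla P\rangle\ge 0$; this is exactly the content of the paper's identity \cref{eq. div 1_1-d}.

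The gap is in Step~2. Your boundary claim that $\partial_\nu P$ reduces to a multiple of $|\nabla^{T}u|^{2}$ is false. Writing $u_{\nu\nu}=-(n-1)u_\nu-\Delta_{\mathbb{S}^{n-1}}u$, the term $u_\nu u_{\nu\nu}$ contributes not only ``pure powers of $u$'' but also $-u_\nu\,\Delta_{\mathbb{S}^{n-1}}u$, which has no counterpart anywhere else in $\partial_\nu P$ and cannot be cancelled by any choice of $\beta$; the enlarged ansatz $P=u^{-2\beta}|\nabla u|^2+Cu^{2\sigma}$ only adds further pure powers of $u$ and does not touch this term either. In the paper's variables this is the quantity $\chi\bigl(\nabla^2 v-\tfrac{\Delta v}{d}g\bigr)(\nu,\nu)$ appearing in \cref{eq. 1}, which contains $\Delta_{\mathbb{S}^{n-1}}f$ and has no pointwise sign. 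Consequently Hopf's lemma cannot be applied as you propose, and the maximum-principle scheme does not close on the boundary.

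This is precisely the obstruction the paper overcomes, and it does so \emph{not} pointwise but by integration against the weight $w=\tfrac{1-|x|^2}{2}$. The vector field $\nabla w$ is closed conformal on $\mathbb{B}^n$, and the Pohozaev-type identities \cref{eq. 1w_2-d}--\cref{eq. 1w_1-d} convert the two uncontrolled boundary integrals of $f^{2-d}(\nabla^2v-\tfrac{\Delta v}{d}g)(\nu,\nu)$ and $f^{1-d}(\nabla^2v-\tfrac{\Delta v}{d}g)(\nu,\nu)$ into interior integrals of $v^{2-d}\Delta v$ and $v^{1-d}\Delta v$, whose combination is then shown to be nonnegative in Steps~3--4. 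The weight $w$ (and the special geometry of $\mathbb{B}^n$ that supplies it) is the missing ingredient in your plan; without it the boundary term carrying $\Delta_{\mathbb{S}^{n-1}}u$ cannot be handled.
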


    Gu-Li's method is based on sophisticated integration by parts, with several delicately chosen parameters, and the computation therein is more or less formidable.

    In the second part of this note, we shall provide a simplified proof of \cref{thm. GL25}. We have to admit that our proof is essentially equivalent to Gu-Li's original proof. However, our argument is based on a \emph{general principle} for classifying solutions of such semilinear elliptic equations, which is a continuation and development of that in \cite{Wan22}. 
    
    The basic idea is to start with the critical power case (i.e. $q=\frac{n}{n-2}$) and study the model solution \cref{eq. model solution} and come up with an appropriate function, known as the P-function in literature (in honer of L. Payne), whose constancy implies the rigidity of the solution $u$. For the subcritical power case (i.e. $1<q<\frac{n}{n-2}$), we regard the equation \cref{eq. harmonic} as the critical one in a larger dimension space (see \cref{eq. intrinsic dimension}), then a modified argument as in the critical power case implies the conclusion. Readers interested in the P-function method are invited to \cite{Pay68,Wei71,Dan11,CFP24} for more research in  this realm.

Specific to \cref{conj.Guo-Wang},  one merit of our argument is that the choice of the parameters is naturally indicated from the viewpoint of the P-function method. As an advantage, the computation is streamlined. Another contribution is that our calculation is in the spirit of Escobar's work \cite{Esc90} and clarifies the role of the weight function as providing a closed conformal vector field (see \cref{cor. div 1w} and \cref{eq. 1w_1-d}). This could shed light on some key difficulties for resolving \cref{conj. Wang}.

    Finally, we mention that Escobar \cite[Theorem 2.1]{Esc90} also classified all conformal metrics on $\mathbb{B}^n$ with nonzero constant scalar curvature and constant boundary mean curvature by studying the solution of the following semilinear elliptic equation with a nonlinear boundary condition:
    \begin{equation*}
            \begin{cases}
                -\Delta u=\frac{n-2}{4(n-1)}R\,u^{\frac{n+2}{n-2}} &\mathrm{ in} \ \mathbb{B}^n,\\
                \frac{\partial u}{\partial\nu}+\frac{n-2}{2} u=\frac{n-2}{2(n-1)}Hu^{\frac{n}{n-2}} &\mathrm{on}\ \mathbb{S}^{n-1},
            \end{cases}
        \end{equation*}
    where $R,\, H$ are constants. The same strategy as our proof of \cref{thm. GL25} could be applied to derive  a slightly more general classification result for such kind of semilinear elliptic equations.
    \begin{theorem}\label{thm. C.}
        If $u\in C^{\infty}(\mathbb{B}^n)$ is a \textbf{nonnegative} solution of the following equation
        \begin{equation}\label{eq. general equation of u-p}
            \begin{cases}
                -\Delta u=\frac{n-2}{4(n-1)}R\,u^{p} &\mathrm{ in} \ \mathbb{B}^n,\\
                \frac{\partial u}{\partial\nu}+\lambda u=\frac{n-2}{2(n-1)}Hu^{\frac{p+1}{2}} &\mathrm{on}\ \mathbb{S}^{n-1},
            \end{cases}
        \end{equation}
        where $R>0, H\geq 0$ are constants, $1<p\leq \frac{n+2}{n-2}$ and $0\leq \lambda\leq\frac{2}{p-1}$.
        \begin{enumerate}
            \item If  $1<p\leq \frac{n+2}{n-2}$ and $\lambda=0$, then $u\equiv 0$.
            \item If  $p=\frac{n+2}{n-2}$ and  $0<\lambda< \frac{2}{p-1}=\frac{n-2}{2}$, then 
            \begin{align*}
                u(x)=\left(\frac{n(n-1)}{R}\right)^{\frac{n-2}{4}}
                \left(
                \frac{1}{2\epsilon}|x|^2+\frac{\epsilon}{2}
                \right)^{-\frac{n-2}{2}},
            \end{align*}
            where $0<\epsilon=\sqrt{\frac{n(n-1)}{R}}
            \left(
            \frac{n-2}{2\lambda}\frac{H}{n-1}+\sqrt{(\frac{n-2}{2\lambda})^2(\frac{H}{n-1})^2+\frac{R}{n(n-1)}(\frac{n-2}{\lambda}-1)}
            \right)$.
            \item If  $p=\frac{n+2}{n-2}$ and $\lambda=\frac{2}{p-1}=\frac{n-2}{2}$, then 
            \begin{align*}
                u(x)
                =\left(\frac{n(n-1)}{R}\right)^{\frac{n-2}{4}}
                \left(
\frac{(1+\epsilon^2|a|^2)|x|^2+2(1+\epsilon^2)\langle x,a\rangle+(\epsilon^2+|a|^2)}{2\epsilon(1-|a|^2)}
            \right)^{-\frac{n-2}{2}},
            \end{align*}
            where $a\in \mathbb{B}^n$, $0<\epsilon
            =\sqrt{\frac{n(n-1)}{R}}
            \left(\frac{H}{n-1}+\sqrt{(\frac{H}{n-1})^2+\frac{R}{n(n-1)}}\right)$.
        \end{enumerate}
    \end{theorem}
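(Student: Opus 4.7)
My plan is to follow the P-function strategy developed in the proof of \cref{thm. GL25}, adapted to the presence of the interior nonlinearity $-\Delta u=\frac{n-2}{4(n-1)}Ru^p$.

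For Case (1), where $\lambda=0$, I would combine the $L^2$-identity obtained by multiplying the equation by $u$ with the Pohozaev-type identity obtained by multiplying by $x\cdot\nabla u$. Because $\lambda=0$ removes the Robin term and because $x=\nu$ on $\mathbb{S}^{n-1}$, every boundary integrand reduces to an explicit nonnegative power of $u|_{\partial\mathbb{B}^n}$ with coefficients controlled by $H\geq 0$. I would then form a linear combination of the two identities so that the net coefficient of $\int_{\mathbb{B}^n}u^{p+1}$ is strictly negative in the range $1<p\leq (n+2)/(n-2)$ with $R>0$, while the remaining boundary terms stay nonnegative; this forces $u\equiv 0$.

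For Cases (2) and (3), where $p=(n+2)/(n-2)$ and $0<\lambda\leq (n-2)/2$, the critical exponent makes the equation conformally covariant and the argument parallels the proof of \cref{thm. GL25}. I would (a) introduce a radial weight $w=w(|x|^2)$ of the form $(\alpha|x|^2+\beta)^{-s}$ chosen so that $\nabla w$ is a closed conformal vector field on $\mathbb{B}^n$, with parameters dictated by $\lambda$; (b) define a P-function $P=w\,F(u,|\nabla u|^2)$ where $F$ is a linear combination of $u^{k}$ and $u^{\ell}|\nabla u|^2$ with the exponents tuned to $p=(n+2)/(n-2)$; (c) derive a pointwise identity $\mathrm{div}(X)=(\text{nonneg.\ expression})$ by combining Bochner's formula with the equation; (d) integrate over $\mathbb{B}^n$ and process the boundary contribution via the Robin condition. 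The nonnegative integrand must then vanish, which produces a first-order ODE for $u$ along the flow of $\nabla w$; integrating this ODE recovers the explicit profiles stated in the theorem. The free parameter $a\in\mathbb{B}^n$ appearing in Case (3) arises as the constant of integration associated to the conformal (Möbius) orbit, which acts on the solution set precisely at the endpoint $\lambda=(n-2)/2$.

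The main obstacle will be the simultaneous tuning of the weight $w$ and the P-function $F$ so that three conditions hold at once: (i) the interior integrand produced in step~(c) is a manifest sum of squares; (ii) the boundary integrand produced by integrating $\mathrm{div}(X)$ cancels against the Robin condition for every admissible $\lambda\in(0,(n-2)/2]$; (iii) the equality case of the Cauchy--Schwarz inequality implicit in (i) yields exactly the families listed in cases (2) and (3). The threshold $\lambda=(n-2)/2$ is distinguished because there the equation is fully conformally covariant; the P-function identity must degenerate in a controlled manner so as to admit the $(n+1)$-parameter family of Möbius solutions, a flexibility that is absent for $\lambda<(n-2)/2$, leaving only the centered radial profile of Case (2).
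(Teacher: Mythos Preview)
Your plan for Case (1) has a genuine gap. Carrying out the two identities you propose on $\mathbb{B}^n$ with $\lambda=0$ gives, after eliminating $\int_{\mathbb{B}^n}|\nabla u|^2$,
\[
c_R\Big(\tfrac{n-2}{2}-\tfrac{n}{p+1}\Big)\!\int_{\mathbb{B}^n}\! u^{p+1}
+\tfrac{n-2}{2}c_H\!\int_{\mathbb{S}^{n-1}}\! u^{\frac{p+3}{2}}
=\tfrac12\!\int_{\mathbb{S}^{n-1}}\!|\nabla_T u|^2
-\Big(\tfrac{c_H^2}{2}+\tfrac{c_R}{p+1}\Big)\!\int_{\mathbb{S}^{n-1}}\! u^{p+1},
\]
where $c_R=\tfrac{(n-2)R}{4(n-1)}$, $c_H=\tfrac{(n-2)H}{2(n-1)}$. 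The interior coefficient is indeed $\le 0$, but the boundary terms do \emph{not} all carry one sign: $\int_{\mathbb{S}^{n-1}}|\nabla_T u|^2\ge 0$ sits on the right, while $-\int_{\mathbb{S}^{n-1}}u^{p+1}$ is negative, and the $u^{(p+3)/2}$ and $u^{p+1}$ boundary integrals have different homogeneities, so no linear combination of just these two identities makes ``remaining boundary terms stay nonnegative.'' The step ``this forces $u\equiv 0$'' therefore does not follow. (Even for $H=0$ the obstruction persists, because of the $|\nabla_T u|^2$ term.)

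The paper avoids this by treating all three cases with the \emph{same} mechanism used for \cref{thm. GL25}. The key point you are missing is that after the substitution $v=u^{-2/(d-2)}$ with $d=\tfrac{2(p+1)}{p-1}$, the identity
\[
\nabla\Delta v=d\,v^{-1}\!\left(\nabla_{\nabla v}\nabla v-\tfrac{\Delta v}{d}\nabla v\right)
\]
continues to hold \emph{verbatim} in the presence of the interior nonlinearity $-\Delta u=c_R u^p$, because the extra $R$-term in $\Delta v$ is proportional to $v^{-1}$ and contributes to $\nabla\Delta v$ in the same way as the gradient term. This lets one run \cref{cor.div 1} and \cref{cor. div 1w} with the simple weight $w=\tfrac{1-|x|^2}{2}$ (not a power as you propose) and arrive at a single integral identity in which, when $\lambda=0$, both sides are sign-definite and force $\nabla^2 v=\tfrac{\Delta v}{n}g$; the contradiction with $R>0$ then comes from plugging $v=r|x|^2+s$ back into the transformed equation. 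For Cases (2)--(3) the same identity with $d=n$ already yields the traceless-Hessian rigidity, and solving for $v$ gives the explicit families directly---no ODE integration along $\nabla w$, and the M\"obius parameter $a$ appears simply as the linear term in $v=r|x|^2+\langle\xi,x\rangle+s$.
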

    \begin{remark}
        The case $R=0$ is completely solved by \cref{thm. GL25}, so we don't include it in the above theorem.
    \end{remark}
    \begin{remark}
        The case $R>0,\, H=0$ has been obtained by Dou-Hu-Xu \cite[Theorem 1.1]{DHX}. Our arguments present a simplified proof of their result. We also invite readers to \cite{DHX} for some consequences of such a Liouville type result.
    \end{remark}

    We also mention that our strategy for \cref{thm. GL25} could be applied to give another proof of \cite[Theorem 6.1]{BVV91} and hence answer a question raised by Wang \cite[Section 5]{Wan22}. This is not the main theme of this note, so we put it in the appendix. We hope that this strategy and general principle may also be useful in some other situations.

    This note is organized as follows. In \cref{sec2}, we prove \cref{thm. C. Ricci}. In \cref{sec3}, we present our new proof of \cref{thm. GL25}. In \cref{sec4}, we prove \cref{thm. C.}. Finally, in \cref{appendix}, we include our proof of \cite[Theorem 6.1]{BVV91}.
 
\textbf{Acknowledgement:} This work was completed during a research visit to Michigan State University, supported by the Zhiyuan Scholarship of Shanghai Jiao Tong University. The author wishes to thank the university for its hospitality and is especially grateful to Prof. Xiaodong Wang and Dr. Zhixin Wang for their helpful discussions.
\section{Proof of \cref{thm. C. Ricci}}\label{sec2}
Our proof exploits two main ingredients in the method of integration by parts: Bochner formula (Step 1) and the equation itself (Step 2).
\begin{proof}\textbf{Step 1:}
    Let $v:=u^{-\frac{1}{a}}$, where $a>0$ is to be determined. We follow the notations in \cite{GHW21} and define $\chi:=\frac{\partial v}{\partial \nu},\, f:=v|_{\partial M}$. Then it's straightforward to see that
    \begin{align}\label{eq. equation of v on M}
        \begin{cases}
            \Delta v=(a+1)v^{-1}|\nabla v|^2 &\text{in }M,\\
            \chi=\frac{1}{a}(\lambda f-f^{a+1-aq}) &\text{on }\partial M.
        \end{cases}
    \end{align}
    By Bochner formula, there holds
    \begin{align*}
        \frac{1}{2}\Delta |\nabla v|^2
        =\left|\nabla^2 v-\frac{\Delta}{n}g\right|^2
        +\frac{1}{n}(\Delta v)^2
        +\langle \nabla \Delta v,\nabla v\rangle
        +\mathrm{Ric}(\nabla v,\nabla v).
    \end{align*}
    Multiply both sides by $v^b$ and integrate it over $M$, where $b$ is a constant to be determined, we have
    \begin{align}\label{eq. Bochner 1}
        \frac{1}{2}\int_M v^b\Delta |\nabla v|^2
        =\int_Mv^b\left(\left|\nabla^2 v-\frac{\Delta}{n}g\right|^2+\mathrm{Ric}(\nabla v,\nabla v) \right)
        +\frac{1}{n}\int_M v^b(\Delta v)^2
        +\int_M v^b\langle \nabla \Delta v,\nabla v\rangle
        .
    \end{align}
    It follows from \cref{eq. equation of v on M}, divergence theorem and the boundary curvature assumption $\Pi\geq 1, H\geq n-1$ that
    \begin{align}\label{eq. LHS of 1}
        &\frac{1}{2}\int_M v^b\Delta |\nabla v|^2
        =\frac{1}{2}\int_M \mathrm{div}(v^b\nabla|\nabla v|^2)
        -\langle\nabla v^b,\nabla |\nabla v|^2\rangle\notag\\
        =&\int_{\partial M}f^b\langle\nabla_{\nabla f+\chi\nu}\nabla v,\nu\rangle
        -b\int_Mv^{b-1}\langle\nabla_{\nabla v}\nabla v,\nabla v\rangle\notag\\
        =&\int_{\partial M}f^b\left(
        \langle\nabla f,\nabla \chi\rangle-\langle\nabla v,\nabla_{\nabla f}\nu\rangle+\chi\nabla^2v(\nu,\nu)
        \right)
        -b\int_Mv^{b-1}\langle\nabla_{\nabla v}\nabla v,\nabla v\rangle\notag\\
        =&\int_{\partial M}f^b\left(
        \langle\nabla f,\nabla \chi\rangle
        -\Pi(\nabla f,\nabla f)
        +\chi(\Delta v-\Delta f-H\chi)
        \right)
        -b\int_Mv^{b-1}\langle\nabla_{\nabla v}\nabla v,\nabla v\rangle\notag\\
        =&\int_{\partial M}f^b
        \left(
        2\langle\nabla f,\nabla \chi\rangle
        -\Pi(\nabla f,\nabla f)
        +(a+1)\chi f^{-1}(|\nabla f|^2+\chi^2)
        +bf^{-1}\chi|\nabla f|^2
        -H\chi^2
        \right)\notag\\
        &-b\int_Mv^{b-1}\langle\nabla_{\nabla v}\nabla v,\nabla v\rangle\notag\\
        \leq&(2(q-1)\lambda-1)\int_{\partial M}f^b|\nabla f|^2
        +\left(a+1+b+2(a+1-aq)\right)\int_{\partial M}f^b\frac{\chi}{f}|\nabla f|^2
        +(a+1)\int_{\partial M}f^b\frac{\chi}{f}\chi^2\notag\\
        &-(n-1)\int_{\partial M}f^b\chi^2
        -b\int_M v^{b-1}\langle\nabla_{\nabla v}\nabla v,\nabla v\rangle\notag\\
        =&(2(q-1)\lambda-1)\int_{\partial M}f^b|\nabla f|^2
        +\frac{a+1+b+2(a+1-aq)}{a}\lambda\int_{\partial M}f^b|\nabla f|^2\notag\\
        &-\frac{a+1+b+2(a+1-aq)}{a}\int_{\partial M}f^{b+a-aq}|\nabla f|^2
        +\frac{a+1}{a}\lambda\int_{\partial M}f^b\chi^2
        -\frac{a+1}{a}\int_{\partial M}f^{b+a-aq}\chi^2\notag\\
        &-(n-1)\int_{\partial M}f^b\chi^2
        -b\int_M v^{b-1}\langle\nabla_{\nabla v}\nabla v,\nabla v\rangle\notag\\
        =&\left(\frac{a+b+3}{a}\lambda-1\right)
        \int_{\partial M}f^b|\nabla f|^2
        +\left(\frac{a+1}{a}\lambda-(n-1)\right)
        \int_{\partial M}f^b\chi^2\notag\\
        &+\left(2(q-1)-\frac{a+b+3}{a}\right)
        \int_{\partial M}f^{b+a-aq}|\nabla f|^2
        -\frac{a+1}{a}\int_{\partial M}f^{b+a-aq}\chi^2
        -b\int_M v^{b-1}\langle\nabla_{\nabla v}\nabla v,\nabla v\rangle.
    \end{align}
    On the other hand, the right hand side of \cref{eq. Bochner 1} could be written as
    \begin{align}\label{eq. RHS of 1}
         &\int_M v^b\left(
        \left|\nabla^2 v-\frac{\Delta v}{n}g\right|^2
        +\mathrm{Ric}(\nabla v,\nabla v)
        \right)
        +\frac{(a+1)^2}{n}\int_M w v^{b-2}|\nabla v|^4
        -(a+1)\int_M v^{b-2}|\nabla v|^4\notag\\
        &+2(a+1)\int_M v^{b-1}\langle\nabla_{\nabla v}\nabla v,\nabla v\rangle.
    \end{align}
    Therefore, by \cref{eq. Bochner 1}, \cref{eq. LHS of 1} and \cref{eq. RHS of 1} we have
    \begin{align}\label{eq. equation 1}
        &\left(\frac{a+b+3}{a}\lambda-1\right)
        \int_{\partial M}f^b|\nabla f|^2
        +\left(\frac{a+1}{a}\lambda-(n-1)\right)
        \int_{\partial M}f^b\chi^2\notag\\
        &+\left(2(q-1)-\frac{a+b+3}{a}\right)\int_{\partial M}f^{b+a-aq}|\nabla f|^2\notag
        -\frac{a+1}{a}\int_{\partial M}f^{b+a-aq}\chi^2\\
        \geq & \int_M v^b\left(
        \left|\nabla^2 v-\frac{\Delta v}{n}g\right|^2
        +\mathrm{Ric}(\nabla v,\nabla v)
        \right)
        +\left(\frac{(a+1)^2}{n}-(a+1)\right)\int_M v^{b-2}|\nabla v|^4\notag\\
        &+(2(a+1)+b)\int_M v^{b-1}\langle\nabla_{\nabla v}\nabla v,\nabla v\rangle.
    \end{align}

    \textbf{Step 2:} Multiply both sides of \cref{eq. equation of v on M} by $v^b$ and integrate it over $M$, we  have
    \begin{align}\label{eq. "Bochner" 2}
        \int_{M}v^b(\Delta v)^2=(a+1)\int_M v^{b-1}|\nabla v|^2\Delta v.
    \end{align}
    It follows from \cref{eq. equation of v on M} that the left hand side of \cref{eq. "Bochner" 2} could be written as
    \begin{align}\label{eq. LHS of 2}
        &\int_{M}v^b(\Delta v)^2
        =\int_M \mathrm{div}(v^b\Delta v\nabla v)
        -\langle \nabla v^b,\nabla v\rangle\Delta v
        -\langle\nabla \Delta v,\nabla v\rangle v^b\notag\\
        =&(a+1)\int_{\partial M}f^{b-1}(|\nabla f|^2+\chi^2)\chi
        -(a+1)b\int_Mv^{b-2}|\nabla v|^4\notag\\
       & +(a+1)\int_M v^{b-2}|\nabla v|^4
        -2(a+1)\int_M v^{b-1}\langle\nabla_{\nabla v}\nabla v,\nabla v\rangle\notag\\
        =&\frac{a+1}{a}\lambda\int_{\partial M}f^b|\nabla f|^2
        +\frac{a+1}{a}\lambda\int_{\partial M}f^b\chi^2
        -\frac{a+1}{a}\int_{\partial M}f^{b+a-aq}|\nabla f|^2
        -\frac{a+1}{a}\int_{\partial M}f^{b+a-aq}\chi^2\notag\\
        &+(a+1)(1-b)\int_M v^{b-2}|\nabla v|^4
        -2(a+1)\int_M v^{b-1}\langle\nabla_{\nabla v}\nabla v,\nabla v\rangle.
    \end{align}
    Therefore, by \cref{eq. "Bochner" 2} and \cref{eq. LHS of 2} we have
    \begin{align}\label{eq. equation 2}
        &\frac{a+1}{a}\lambda\int_{\partial M}f^b|\nabla f|^2
        +\frac{a+1}{a}\lambda\int_{\partial M}f^b\chi^2
        -\frac{a+1}{a}\int_{\partial M}f^{b+a-aq}|\nabla f|^2
        -\frac{a+1}{a}\int_{\partial M}f^{b+a-aq}\chi^2\notag\\
        =&\left(
        (a+1)^2+(a+1)(b-1)
        \right)\int_M v^{b-2}|\nabla v|^4
        +2(a+1)\int_M v^{b-1}\langle\nabla_{\nabla v}\nabla v,\nabla v\rangle.
    \end{align}
    \textbf{Step 3:} Now consider $\cref{eq. equation 1}+c\cref{eq. equation 2}$:
    \begin{align*}
        &\left(\frac{a+b+3}{a}\lambda-1+\frac{a+1}{a}\lambda c\right)
        \int_{\partial M}f^b|\nabla f|^2
        +\left(\frac{a+1}{a}\lambda-(n-1)+\frac{a+1}{a}\lambda c\right)
        \int_{\partial M}f^b\chi^2\notag\\
        &+\left(2(q-1)-\frac{a+b+3}{a}-\frac{a+1}{a}c\right)
        \int_{\partial M}f^{b+a-aq}|\nabla f|^2\notag
        -(\frac{a+1}{a}+\frac{a+1}{a}c)\int_{\partial M}f^{b+a-aq}\chi^2\\
        \geq & \int_M v^b\left(
        \left|\nabla^2 v-\frac{\Delta v}{n}g\right|^2
        +\mathrm{Ric}(\nabla v,\nabla v)
        \right)
        +\left((\frac{1}{n}+c)(a+1)^2
        +(cb-c-1)(a+1)\right)\int_M v^{b-2}|\nabla v|^4\notag\\
        &+\left(2(c+1)(a+1)+b\right)\int_M v^{b-1}\langle\nabla_{\nabla v}\nabla v,\nabla v\rangle.
    \end{align*}
   Define $\beta$ by setting $b=-\beta(a+1)$, $x:=\frac{1}{a}$ and choose $c=-1+\frac{\beta}{2}$ to eliminate the  term $\int_M v^{b-1}\langle \nabla_{\nabla v}\nabla v,\nabla v\rangle$, we obtain
    \begin{align}\label{eq. Final ineq.}
        &\left(\frac{4-\beta}{2}\lambda x-(1+\frac{\beta\lambda}{2})\right)
        \int_{\partial M}f^b|\nabla f|^2
        +\left(\frac{\beta\lambda}{2}(x+1)-(n-1)\right)
        \int_{\partial M}f^b\chi^2\notag\\
        &+\left(-\frac{4-\beta}{2}x+2(q-1)+\frac{1}{2}\beta\right)
        \int_{\partial M}f^{b+a-aq}|\nabla f|^2\notag
        -\frac{1}{2}\beta(x+1)\int_{\partial M}f^{b+a-aq}\chi^2\\
        \geq & \int_M v^b\left(
        \left|\nabla^2 v-\frac{\Delta v}{n}g\right|^2
        +\mathrm{Ric}(\nabla v,\nabla v)
        \right)\notag\\
        &+\frac{1}{x^2}(x+1)\left(
        -(\frac{1}{2}\beta^2-\beta+\frac{n-1}{n})(x+1)+\frac{1}{2}\beta
        \right)
        \int_M v^{b-2}|\nabla v|^4.
    \end{align}
    \textbf{Step 4:} Now we verify the condition (1) in \cref{thm. C. Ricci}: Take $\beta=1, x=\frac{\beta+4(q-1)}{4-\beta}=\frac{1+4(q-1)}{3}$ such that $-\frac{4-\beta}{2}x+2(q-1)+\frac{1}{2}\beta=0$. Equivalently, $a=\frac{3}{1+4(q-1)}$ and $b=-(a+1)=-\frac{4q}{4q-3}$. Then \cref{eq. Final ineq.} turns out to be
    \begin{align}\label{eq. Final (a)}
        &\left(2(q-1)\lambda-1 \right)\int_{\partial M}f^b|\nabla f|^2
        +\left(\frac{2}{3}q\lambda-(n-1)\right)\int_{\partial M}f^b\chi^2
        -\frac{2}{3}q\int_{\partial M}f^{b+a-aq}\chi^2\notag\\
        \geq& \int_M v^b\left(
        \left|\nabla^2 v-\frac{\Delta v}{n}g\right|^2
        +\mathrm{Ric}(\nabla v,\nabla v)
        \right)
        +\left(\frac{3}{1+4(q-1)}\right)^2
        \frac{4}{3}q
        \left(\frac{1}{2}-\frac{2(n-2)}{3n}q\right)
        \int_M v^{b-2}|\nabla v|^4.
    \end{align}
    Then the condition (1) in \cref{thm. C. Ricci} implies that the left hand side of \cref{eq. Final (a)} is less or equal to zero, while the right hand side of \cref{eq. Final (a)} is larger or equal to zero. It follows that $0\equiv\chi=\frac{\partial v}{\partial \nu}$. Hence by \cref{eq. equation of v on M} we deduce that $v|_{\partial M}=f$ is constant, and $u$ is a harmonic function in $M$ with constant boundary value on $\partial M$. Therefore $u$ is constant in $M$.

    \textbf{Step 5:} Now we verify the condition (2) in \cref{thm. C. Ricci}: By analyzing the range of $x$ such that the left hand side of \cref{eq. Final ineq.} is less or equal to zero and the right hand side of \cref{eq. Final ineq.} is larger or equal to zero, we obtain the optimal value of $\beta$ as $\beta=\frac{4q+2}{4q+1}$, $x=\frac{\beta+4(q-1)}{4-\beta}=\frac{8q^2-4q-1}{6q+1}$. Equivalently, we choose $a=\frac{6q+1}{8q^2-4q-1}$ and $b=-\frac{4q+2}{4q+1}(a+1)=-\frac{4q(2q+1)}{8q^2-4q-1}$. Then \cref{eq. Final ineq.} turns out to be
    \begin{align}\label{eq. Final (b)}
        &\left(2(q-1)\lambda-1\right)
        \int_{\partial M}f^b|\nabla f|^2
        +\left(\frac{2q(2q+1)}{6q+1}\lambda-(n-1)\right)
        \int_{\partial M}f^b\chi^2
        -\frac{2q(2q+1)}{6q+1}\int_{\partial M}f^{b+a-aq}\chi^2\notag\\
        \geq & \int_M v^b\left(
        \left|\nabla^2 v-\frac{\Delta v}{n}g\right|^2
        +\mathrm{Ric}(\nabla v,\nabla v)
        \right)\notag\\
        &+\left(\frac{6q+1}{8q^2-4q-1}\right)^2
        \left(\frac{2q(4q+1)}{6q+1}\right)
        \left(
        \frac{-4(n-2)q^2+2(n+1)q+n}{(6q+1)n}
        \right)
        \int_M v^{b-2}|\nabla v|^4.
    \end{align}
    Then the condition (2) in \cref{thm. C. Ricci} implies that the left hand side of \cref{eq. Final (b)} is less or equal to zero, while the right hand side of \cref{eq. Final (b)} is larger or equal to zero. As before, we could conclude that $u$ is a constant function on $M$.
\end{proof}

 \section{Proof of \cref{thm. GL25}}\label{sec3}
 We shall first establish some identities that holds on general Riemannian manifolds.
\begin{lemma}\label{cor.div 1}
    Let $(M^n,g)$ be a Riemannian manifold. For any $v\in C^{\infty}(M)$ and constant $d>0$, there holds
    \begin{align}\label{eq. div 1}
        \mathrm{div}\left(\nabla_{\nabla v}\nabla v-\frac{\Delta v}{d}\nabla v\right)
        =\left|\nabla ^2 v-\frac{\Delta v}{n}g\right|^2
        +\left(\frac{1}{n}-\frac{1}{d}\right)(\Delta v)^2
        +\left(1-\frac{1}{d}\right)
        \langle\nabla \Delta v,\nabla v\rangle+\mathrm{Ric}(\nabla v,\nabla v).
    \end{align}
\end{lemma}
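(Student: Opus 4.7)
The identity is a cosmetic rearrangement of the Bochner formula, so the plan is purely computational, with no real obstacle.

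First I would note the symmetry identity
\[
\nabla_{\nabla v}\nabla v \;=\; \tfrac{1}{2}\nabla|\nabla v|^2,
\]
which holds on any Riemannian manifold because $\nabla_k v\,\nabla^k\nabla^i v=\nabla_k v\,\nabla^i\nabla^k v=\tfrac{1}{2}\nabla^i(|\nabla v|^2)$ by symmetry of the Hessian. Taking divergences gives
\[
\mathrm{div}\bigl(\nabla_{\nabla v}\nabla v\bigr)=\tfrac{1}{2}\Delta|\nabla v|^2,
\]
and the standard Bochner formula yields
\[
\tfrac{1}{2}\Delta|\nabla v|^2=|\nabla^2 v|^2+\langle\nabla\Delta v,\nabla v\rangle+\mathrm{Ric}(\nabla v,\nabla v).
\]

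Next I would handle the second term by the product rule,
\[
\mathrm{div}\!\left(\tfrac{\Delta v}{d}\nabla v\right)=\tfrac{1}{d}\bigl(\langle\nabla\Delta v,\nabla v\rangle+(\Delta v)^2\bigr),
\]
and subtract it from the previous display, collecting the coefficients in front of $\langle\nabla\Delta v,\nabla v\rangle$ and $(\Delta v)^2$.

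Finally, I would replace $|\nabla^2 v|^2$ by its traceless decomposition
\[
|\nabla^2 v|^2=\Bigl|\nabla^2 v-\tfrac{\Delta v}{n}g\Bigr|^2+\tfrac{(\Delta v)^2}{n},
\]
so that the $(\Delta v)^2$ terms combine to give the claimed coefficient $\tfrac{1}{n}-\tfrac{1}{d}$. After this substitution the right-hand side matches \eqref{eq. div 1} exactly. The hardest part is bookkeeping the signs of the four scalar terms; none of the steps requires any new ingredient beyond Bochner and the product rule.
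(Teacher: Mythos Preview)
Your argument is correct and is exactly the computation the paper has in mind: the paper's proof consists of the single sentence ``This is a straightforward corollary of Bochner formula,'' and your write-up simply unpacks that sentence via $\nabla_{\nabla v}\nabla v=\tfrac12\nabla|\nabla v|^2$, Bochner, the product rule for $\mathrm{div}(\tfrac{\Delta v}{d}\nabla v)$, and the traceless Hessian decomposition.
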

\begin{proof}
    This is a straightforward corollary of Bochner formula.
\end{proof}
\begin{lemma}\label{cor. div 1w}
    Let $(M^n,g)$ be a Riemannian manifold admitting a smooth function $w$ with $\nabla ^2 w=\frac{\Delta w}{n}g$, then for any $v\in C^{\infty}(M)$ and constant $d>0$ there holds
    \begin{align}\label{eq. div 1w}
        \mathrm{div}\left(\nabla_{\nabla w}\nabla v-\frac{\Delta v}{d}\nabla w\right)
        =\left(\frac{1}{n}-\frac{1}{d}\right)\Delta v\Delta w
        -\left(1-\frac{1}{n}\right)\langle\nabla \Delta w,\nabla v\rangle
        +\left(1-\frac{1}{d}\right)\langle\nabla \Delta v,\nabla w\rangle.
    \end{align}
\end{lemma}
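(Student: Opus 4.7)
The plan is to compute the divergence directly and then invoke the hypothesis $\nabla^{2}w=\frac{\Delta w}{n}g$ only at the very end. On a general Riemannian manifold, expanding in a local orthonormal frame and using the product rule gives
\[
\mathrm{div}(\nabla_{\nabla w}\nabla v)=\langle \nabla^{2}w,\nabla^{2}v\rangle+w^{;j}v^{;i}{}_{;ji},
\]
and commuting the last two covariant derivatives via the Ricci identity converts $v^{;i}{}_{;ji}$ into $(\Delta v)_{;j}+R_{jk}v^{;k}$. Therefore
\[
\mathrm{div}(\nabla_{\nabla w}\nabla v)=\langle \nabla^{2}w,\nabla^{2}v\rangle+\langle \nabla\Delta v,\nabla w\rangle+\mathrm{Ric}(\nabla w,\nabla v).
\]
An elementary computation also yields $\mathrm{div}\!\left(\tfrac{\Delta v}{d}\nabla w\right)=\tfrac{1}{d}\langle\nabla\Delta v,\nabla w\rangle+\tfrac{\Delta v\Delta w}{d}$. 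Subtracting these two identities gives a general formula for $\mathrm{div}(X)$ with $X=\nabla_{\nabla w}\nabla v-\tfrac{\Delta v}{d}\nabla w$, valid without any assumption on $w$.

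The next step is to simplify using $\nabla^{2}w=\tfrac{\Delta w}{n}g$. Taking traces directly gives $\langle\nabla^{2}w,\nabla^{2}v\rangle=\tfrac{\Delta w}{n}\Delta v$. For the Ricci term I would derive the standard closed-conformal-vector-field identity $\mathrm{Ric}(\nabla w,\cdot)=-\tfrac{n-1}{n}\nabla\Delta w$: this follows by taking $\mathrm{div}$ of both sides of $\nabla^{2}w=\tfrac{\Delta w}{n}g$, using the Bochner-type identity $w_{;ij}{}^{;j}=(\Delta w)_{;i}+R_{ij}w^{;j}$ (obtained by the same Ricci-commutation done above, but for the vector field $\nabla w$), and comparing.

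Substituting these two simplifications into the raw formula for $\mathrm{div}(X)$ and collecting like terms produces exactly the right-hand side of \eqref{eq. div 1w}. The computation is routine; the only real point to watch is keeping the sign conventions for the Ricci identity consistent throughout the two places where covariant derivatives are commuted. Conceptually, the lemma is a kind of "mixed" polarization of \cref{cor.div 1}, with the asymmetry between $v$ and $w$ arising precisely because the conformal Hessian condition is imposed only on $w$.
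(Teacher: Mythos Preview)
Your proposal is correct and follows essentially the same approach as the paper: both compute $\mathrm{div}(\nabla_{\nabla w}\nabla v)$ via the product rule and Ricci commutation, then use the hypothesis on $w$ to simplify $\langle\nabla^2 w,\nabla^2 v\rangle$ and the Ricci term. The only cosmetic difference is that the paper obtains $\mathrm{Ric}(\nabla w,\cdot)=-\tfrac{n-1}{n}\nabla\Delta w$ by first computing the full curvature tensor $R(X,Y)\nabla w$ and tracing, whereas you get it more directly by applying $\mathrm{div}$ to both sides of $\nabla^2 w=\tfrac{\Delta w}{n}g$.
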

\begin{proof}
    Notice that
    \begin{align*}
        \mathrm{div}(\nabla_{\nabla w}\nabla v)
        &=\frac{1}{2}\langle \nabla^2 v,\mathcal{L}_{\nabla w}g\rangle
        +\langle\nabla\Delta v,\nabla w\rangle
        +Ric(\nabla v,\nabla w)\\
        &=\frac{1}{n}\Delta v\Delta w
        +\langle\nabla\Delta v,\nabla w\rangle
        +Ric(\nabla v,\nabla w),
    \end{align*}
    where we used Ricci's identity
    \begin{align*}
        \mathrm{div}(\nabla^2 v)=\nabla \Delta v+\mathrm{Ric}(\nabla v,\cdot).
    \end{align*}
    The hypothesis on $w$ implies that the curvature operator $R$ satisfies
    \begin{align*}
        R(X,Y)\nabla w=\left\langle X, \frac{\nabla \Delta w}{n}\right\rangle Y
        -\left\langle Y, \frac{\nabla \Delta w}{n}\right\rangle X,\quad \forall X,Y\in T_p(M).
    \end{align*}
    It follows that 
    \begin{align*}
        Ric(\nabla w,\cdot)=-\frac{n-1}{n}\nabla \Delta w.
    \end{align*}
    Therefore, we derive
    \begin{align*}
        \mathrm{div}(\nabla_{\nabla w}\nabla v)
        =\frac{1}{n}\Delta v\Delta w
        +\langle\nabla\Delta v,\nabla w\rangle
        -\frac{n-1}{n}\langle\nabla \Delta w,\nabla v\rangle.
    \end{align*}
    Hence the desired identity follows.
\end{proof}
Now we are ready to present our proof of \cref{thm. GL25}.
Roughly speaking, we regard the equation \cref{eq. harmonic} as the  critical power case in a $d-$dimensional space, where \begin{align}\label{eq. intrinsic dimension}
    d:=\frac{2q}{q-1}\geq n.
\end{align}
Then we modify Escobar's argument \cite{Esc90} to fit in this "critical case", and use the boundary condition to tackle the emerging terms in this case.

At the end of our proof, we shall review and compare our choice of parameters with those in Gu-Li \cite{GL25}.

\begin{proof}[Proof of \cref{thm. GL25}:] Denote $g$ to be the Euclidean metric on $\mathbb{B}^n$ and $\nu$ to be the unit outer normal vector of $\partial \mathbb{B}^n=\mathbb{S}^{n-1}$.

\textbf{Step 1:}    Consider the intrinsic dimension $d:=\frac{2q}{q-1}$ so that $q=\frac{d}{d-2}$. Then $d\geq n\Leftrightarrow q\leq \frac{n}{n-2}$. Let $v:=u^{-\frac{2}{d-2}}$, $\chi:=\frac{\partial v}{\partial \nu}, f:=v|_{\mathbb{S}^{n-1}}$, then $v$ satisfies
    \begin{align}\label{eq. PDE of v}
        \begin{cases}
            \Delta v=\frac{d}{2}v^{-1}|\nabla v|^2 &\mathrm{in}\ \mathbb{B}^n,\\
            \chi=\frac{2}{d-2}(\lambda f-1) &\mathrm{on}\ \mathbb{S}^{n-1}.
        \end{cases}
    \end{align}
    We shall consider the P-function $P:=v^{-1}|\nabla v|^2=\frac{2}{d}\Delta v$, which satisfies $P\equiv constant$ if $u$ is the model solution \cref{eq. model solution}, and derive the equation satisfied by $P$ (i.e. \cref{eq. div 1_1-d}).
    
    It's straightforward to see from \cref{eq. PDE of v} that
    \begin{align}\label{eq. nabla Delta v}
        \nabla \Delta v
        =dv^{-1}
        \left(\nabla_{\nabla v}\nabla v-\frac{\Delta v}{d}\nabla v\right).
    \end{align}
    Hence by \cref{cor.div 1} we have
    \begin{align}\label{eq. div 1_1-d}
        \mathrm{div}\left(
        v^{1-d}\left(\nabla_{\nabla v}\nabla v-\frac{\Delta v}{d}\nabla v\right)
        \right)
        =v^{1-d}\left(
        \left|\nabla^2 v-\frac{\Delta v}{n}g\right|^2
        +\left(\frac{1}{n}-\frac{1}{d}\right)(\Delta v)^2
        \right).
    \end{align}
    Integrate this equation over $\mathbb{B}^n$ and use the boundary value condition \cref{eq. PDE of v}, we get
    \begin{align}\label{eq. 1}
        &\int_{\mathbb{B}^n}v^{1-d}\left(
        \left|\nabla^2 v-\frac{\Delta v}{n}g\right|^2
        +\left(\frac{1}{n}-\frac{1}{d}\right)(\Delta v)^2
        \right)\notag\\
        =&\int_{\mathbb{S}^{n-1}}f^{1-d}
        \left(\nabla^2 v-\frac{\Delta v}{d}g\right)(\nabla v,\nu)\notag
        =\int_{\mathbb{S}^{n-1}}f^{1-d}
        \left(\nabla^2 v-\frac{\Delta v}{d}g\right)(\nabla f+\chi\nu,\nu)\\
        =&\int_{\mathbb{S}^{n-1}}f^{1-d}(\langle \nabla f,\nabla \chi\rangle-|\nabla f|^2)
        +\int_{\mathbb{S}^{n-1}}\chi f^{1-d}
        \left(\nabla^2 v-\frac{\Delta v}{d}g\right)(\nu,\nu)\notag\\
        =&\left(\frac{2}{d-2}\lambda-1\right)
        \int_{\mathbb{S}^{n-1}}f^{1-d}|\nabla f|^2
        +\frac{2}{d-2}\lambda
        \int_{\mathbb{S}^{n-1}}f^{2-d}
        \left(\nabla^2 v-\frac{\Delta v}{d}g\right)(\nu,\nu)\notag\\
        &-\frac{2}{d-2}
        \int_{\mathbb{S}^{n-1}}f^{1-d}
        \left(\nabla^2 v-\frac{\Delta v}{d}g\right)(\nu,\nu).
    \end{align}

    \textbf{Step 2:} Take $w:=\frac{1-|x|^2}{2}$. Then $w$ satisfies: $w|_{\mathbb{S}^{n-1}}=0,\ \nabla w|_{\mathbb{S}^{n-1}}=-\nu,\  \nabla^2 w=\frac{\Delta w}{n}g,\ \Delta w\equiv-n$. We shall use $w$ to tackle the last two terms in the right hand side of \cref{eq. 1}.

    Now use \cref{cor. div 1w}, \cref{eq. nabla Delta v} and $\Delta w\equiv-n$, we derive
    \begin{align}\label{eq. div 1w_2-d}
        \mathrm{div}\left(
        v^{2-d}
        \left(\nabla_{\nabla w}\nabla v-\frac{\Delta v}{d}\nabla w\right)
        \right)
        =\left(\frac{n}{d}-1\right)v^{2-d}\Delta v
        +v^{1-d}
        \left\langle\nabla_{\nabla v}\nabla v-\frac{\Delta v}{d}\nabla v,\nabla w\right\rangle.
    \end{align}
    It follows from \cref{eq. div 1_1-d} and \cref{eq. div 1w_2-d} that
    \begin{align*}
        &wv^{1-d}\left(
        \left|\nabla^2 v-\frac{\Delta v}{n}g\right|^2
        +\left(\frac{1}{n}-\frac{1}{d}\right)(\Delta v)^2
        \right)
        =w\mathrm{div}\left(
        v^{1-d}\left(\nabla_{\nabla v}\nabla v-\frac{\Delta v}{d}\nabla v\right)
        \right)\notag\\
        =&\mathrm{div}\left(
        wv^{1-d}
        \left(\nabla_{\nabla v}\nabla v-\frac{\Delta v}{d}\nabla v\right)
        \right)
        -v^{1-d}\left\langle\nabla_{\nabla v}\nabla v-\frac{\Delta v}{d}\nabla v,\nabla w\right\rangle\notag\\
        =&\mathrm{div}\left(
        wv^{1-d}\left(\nabla_{\nabla v}\nabla v-\frac{\Delta v}{d}\nabla v\right)
        \right)
        -\mathrm{div}\left(
        v^{2-d}
        \left(\nabla_{\nabla w}\nabla v-\frac{\Delta v}{d}\nabla w\right)
        \right)
        +\left(\frac{n}{d}-1\right)v^{2-d}\Delta v.
    \end{align*}
    Integrate this equation over $\mathbb{B}^n$ and notice that $\nabla w|_{\mathbb{S}^{n-1}}=-\nu$, we get
    \begin{align}\label{eq. 1w_2-d}
        &\int_{\mathbb{B}^n}wv^{1-d}\left(
        \left|\nabla^2 v-\frac{\Delta v}{n}g\right|^2
        +\left(\frac{1}{n}-\frac{1}{d}\right)(\Delta v)^2
        \right)\notag\\
        =&\int_{\mathbb{S}^{n-1}}f^{2-d}\left(\nabla^2 v-\frac{\Delta v}{d}g\right)(\nu,\nu)
        +\left(\frac{n}{d}-1\right)
        \int_{\mathbb{B}^n}v^{2-d}\Delta v.
    \end{align}

    Once again,\cref{eq. nabla Delta v} and \cref{cor. div 1w} implies
    \begin{align*}
        \mathrm{div}\left(
        v^{1-d}
        \left(\nabla_{\nabla w}\nabla v-\frac{\Delta v}{d}\nabla w\right)
        \right)
        =\left(\frac{n}{d}-1\right)v^{1-d}\Delta v.
    \end{align*}
    Integrate this equation over $\mathbb{B}^n$ and notice that $\nabla w|_{\mathbb{S}^{n-1}}=-\nu$, we get
    \begin{align}\label{eq. 1w_1-d}
        \int_{\mathbb{S}^{n-1}}f^{1-d}
        \left(\nabla^2 v-\frac{\Delta v}{d}g\right)(\nu,\nu)
        =\left(1-\frac{n}{d}\right)
        \int_{\mathbb{B}^n}v^{1-d}\Delta v.
    \end{align}
    Note that \cref{eq. 1w_1-d} is exactly the Pohozaev identity \cite[Proposition 1.4]{Sch88} used by Escobar \cite{Esc90}, whose validity comes from the fact that $\nabla w$ is a closed conformal vector field.

    Plug \cref{eq. 1w_2-d} and \cref{eq. 1w_1-d} into \cref{eq. 1}, we derive the key integral identity
    \begin{align*}
        &\int_{\mathbb{B}^n}
        \left(1-\frac{2}{d-2}\lambda w\right)
        v^{1-d}\left(
        \left|\nabla^2 v-\frac{\Delta v}{n}g\right|^2
        +\left(\frac{1}{n}-\frac{1}{d}\right)(\Delta v)^2
        \right)\notag\\
        =&\left(\frac{2}{d-2}\lambda-1\right)
        \int_{\mathbb{S}^{n-1}}f^{1-d}|\nabla f|^2
        +\frac{2}{d-2}
        \left(\frac{n}{d}-1\right)\left(
        \int_{\mathbb{B}^n}v^{1-d}\Delta v-\lambda \int_{\mathbb{B}^n}v^{2-d}\Delta v
        \right).
    \end{align*}

    Notice that $1-\frac{2}{d-2}\lambda w\geq1-\frac{1}{d-2}\lambda\geq 0$ , $\frac{2}{d-2}\lambda-1\leq 0$ and $\frac{n}{d}-1\leq 0$, our proof would be complete once we show that
    \begin{align}\label{eq. goal}
        \int_{\mathbb{B}^n}v^{1-d}\Delta v-\lambda \int_{\mathbb{B}^n}v^{2-d}\Delta v\geq 0.
    \end{align}

    \textbf{Step 3:} We shall use the boundary condition \cref{eq. PDE of v} to make \cref{eq. goal} homogeneous.
    
    The boundary  condition in \cref{eq. PDE of v} implies
    \begin{align*}
        \int_{\mathbb{S}^{n-1}}f^{2-d}\chi
        =\int_{\mathbb{B}^n}\mathrm{div}(v^{2-d}\nabla v)
        =\int_{\mathbb{B}^n}(v^{2-d}\Delta v+(2-d)v^{-d}|\nabla v|^2)
        =\frac{4-d}{d}\int_{\mathbb{B}^n}v^{2-d}\Delta v,\\
        \int_{\mathbb{S}^{n-1}}f^{1-d}\chi
        =\int_{\mathbb{B}^n}\mathrm{div}(v^{1-d}\nabla v)
        =\int_{\mathbb{B}^n}(v^{1-d}\Delta v+(1-d)v^{-d}|\nabla v|^2)
        =\frac{2-d}{d}\int_{\mathbb{B}^n}v^{1-d}\Delta v.
    \end{align*}
    Therefore we have
    \begin{align*}
        \int_{\mathbb{S}^{n-1}}f^{1-d}\chi^2
        =\frac{2}{d-2}\lambda\int_{\mathbb{S}^{n-1}}f^{2-d}\chi
        -\frac{2}{d-2}\int_{\mathbb{S}^{n-1}}f^{1-d}\chi
        =\frac{2(4-d)}{d(d-2)}\lambda\int_{\mathbb{B}^n} v^{2-d}\Delta v+\frac{2}{d}\int_{\mathbb{B}^n}v^{1-d}\Delta v.
    \end{align*}
    Equivalently,
    \begin{align*}
        \int_{\mathbb{B}^n}v^{1-d}\Delta v
        =\frac{d}{2}\int_{\mathbb{S}^{n-1}}f^{1-d}\chi^2
        +\frac{d-4}{d-2}\lambda\int_{\mathbb{B}^n}v^{2-d}\Delta v.
    \end{align*}
    Hence we obtain
    \begin{align*}
        \int_{\mathbb{B}^n}v^{1-d}\Delta v-\lambda \int_{\mathbb{B}^n}v^{2-d}\Delta v
        =\frac{d}{2}\int_{\mathbb{S}^{n-1}}f^{1-d}\chi^2-\frac{2}{d-2}\lambda\int_{\mathbb{B}^n}v^{2-d}\Delta v.
    \end{align*}
Therefore, it suffices to show that $\frac{d}{2}\int_{\mathbb{S}^{n-1}}f^{1-d}\chi^2-\frac{2}{d-2}\lambda\int_{\mathbb{B}^n}v^{2-d}\Delta v\geq 0$.

    \textbf{Step 4:} Using $\nabla^2 w=-g$ and \cref{eq. PDE of v}, we could express $\int_{\mathbb{S}^{n-1}}f^{1-d}\chi^2$ as
    \begin{align*}
        &\int_{\mathbb{S}^{n-1}}f^{1-d}\chi^2
        =-\int_{\mathbb{B}^n}\mathrm{div}\left(v^{1-d}\langle \nabla v,\nabla w\rangle\nabla v\right)\notag\\
        =&-\int_{\mathbb{B}^n}\left\{v^{1-d}\langle \nabla v,\nabla w\rangle\Delta v+(1-d)v^{-d}\langle \nabla v,\nabla w\rangle|\nabla v|^2
        +v^{1-d}\langle\nabla_{\nabla v}\nabla v,\nabla w\rangle
        +v^{1-d}\nabla^2 w(\nabla v,\nabla v)\right\}\notag\\
        =&-\int_{\mathbb{B}^n}v^{1-d}
        \left\langle\nabla_{\nabla v}\nabla v-\frac{d-2}{d}\Delta v\nabla v,\nabla w\right\rangle
        +\frac{2}{d}\int_{\mathbb{B}^n}v^{2-d}\Delta v\notag\\
        =&\int_{\mathbb{B}^n}\left\{w\mathrm{div}\left(
        v^{1-d}
        \left(\nabla_{\nabla v}\nabla v-\frac{d-2}{d}\Delta v\nabla v\right)
        \right)
        -\mathrm{div}\left(
        wv^{1-d}
        \left(\nabla_{\nabla v}\nabla v-\frac{d-2}{d}\Delta v\nabla v\right)
        \right)\right\}\\
        &+\frac{2}{d}\int_{\mathbb{B}^n}v^{2-d}\Delta v.
    \end{align*}
    Therefore, 
    \begin{align}\label{eq. last equation}
        &\frac{d}{2}\int_{\mathbb{S}^{n-1}}f^{1-d}\chi^2
        -\frac{2}{d-2}\lambda\int_{\mathbb{B}^n}v^{2-d}\Delta v\notag \\
        =&\left(1-\frac{2}{d-2}\lambda\right)
        \int_{\mathbb{B}^n}v^{2-d}\Delta v
        +\frac{d}{2}\int_{\mathbb{B}^n}w\mathrm{div}\left(
        v^{1-d}
        \left(\nabla_{\nabla v}\nabla v-\frac{d-2}{d}\Delta v\nabla v\right)
        \right).
    \end{align}
    Finally, we use \cref{cor.div 1} and \cref{eq. nabla Delta v} to calculate the last term in \cref{eq. last equation} as follows:
    \begin{align*}
        &w\mathrm{div}\left(
        v^{1-d}
        \left(\nabla_{\nabla v}\nabla v-\frac{d-2}{d}\Delta v\nabla v\right)
        \right)\\
        =&wv^{1-d}\left(
            \mathrm{div}
            \left(\nabla_{\nabla v}\nabla v-\frac{d-2}{d}\Delta v\nabla v\right)
            +(1-d)v^{-1}
            \left\langle \nabla_{\nabla v}\nabla v-\frac{d-2}{d}\Delta v\nabla v,\nabla v\right\rangle
        \right)\\
        =&wv^{1-d}\left(
        \left|\nabla^2 v-\frac{\Delta v}{n}g\right|^2
        +\left(\frac{1}{n}-\frac{d-2}{d}\right)
        (\Delta v)^2
        +2v^{-1}
        \left\langle \nabla_{\nabla v}\nabla v-\frac{\Delta v}{d}\nabla v,\nabla v\right\rangle\right.\\
        &\quad\quad\quad\left.
        +(1-d)v^{-1}\left\langle \nabla_{\nabla v}\nabla v-\frac{d-2}{d}\Delta v\nabla v,\nabla v\right\rangle
        \right)\\
        =&wv^{1-d}\left(
        \left|\nabla^2 v-\frac{\Delta v}{n}g\right|^2
        +(3-d)v^{-1}\langle \nabla_{\nabla v}\nabla v,\nabla v\rangle
        +\left(\frac{1}{n}-\frac{d-2}{d}+\frac{2(d-3)}{d}\right)(\Delta v)^2
        \right)\\
        =&wv^{1-d}\left(
        \left|\nabla^2 v-\frac{\Delta v}{n}g+\frac{3-d}{2}\frac{\mathrm{d}v\otimes\mathrm{d}v}{v}\right|^2
        +\frac{(d-3)(d+2n)+3(d-n)}{nd^2}(\Delta v)^2
        \right)\geq 0.
    \end{align*}
    This finishes the proof of \cref{thm. GL25}.
\end{proof}
\begin{remark}
    Gu-Li chose the power of $v$ as $a=-\frac{q+1}{q-1}$ (see \cite[Section 3.1]{GL25}), which is exactly $1-d$ in our \cref{eq. div 1_1-d}. They chose the combination coefficient of the vector field as $b=\frac{q-1}{2q}$ (see \cite[Section 3.2]{GL25}), which is exactly $\frac{1}{d}$ in our \cref{eq. nabla Delta v}. They finally choose the combination coefficient of the weight function as $\frac{2}{1+c}=\lambda(q-1)$ (see \cite[Section 3.3]{GL25}), which equals $\frac{2}{d-2}\lambda$ and appears naturally in our \cref{eq. 1}.
\end{remark}
\section{Proof of \cref{thm. C.}}\label{sec4}
\begin{proof}[Proof of \cref{thm. C.}:]
    Denote $g$ to be the Euclidean metric on $\mathbb{B}^n$ and $\nu$ to be the unit outer normal vector of $\partial \mathbb{B}^n=\mathbb{S}^{n-1}$.

    Assume $u$ is not identically zero, then $u$ is superharmonic by \cref{eq. general equation of u-p}. Hence the maximum principle and the Hopf lemma imply that $u>0$ on $\overline{\mathbb{B}^n}$. 
    
    The remaining calculation is a minor modification of our proof of \cref{thm. GL25} in \cref{sec3}. For completeness, we repeat the calculation and provide all details.

\textbf{Step 1:}    Define $d:=\frac{2(p+1)}{p-1}$. Then $d\geq n\Leftrightarrow p\leq \frac{n+2}{n-2}$. Let $v:=u^{-\frac{2}{d-2}}$, $\chi:=\frac{\partial v}{\partial \nu}, f:=v|_{\mathbb{S}^{n-1}}$, then $v$ satisfies
    \begin{align}\label{eq. PDE of v_nonlinear}
        \begin{cases}
            \Delta v=\frac{d}{2}v^{-1}|\nabla v|^2+\frac{n-2}{2(d-2)}\frac{R}{n-1}v^{-1} &\mathrm{in}\ \mathbb{B}^n,\\
            \chi=\frac{2}{d-2}\lambda f-\frac{n-2}{d-2}\frac{H}{n-1} &\mathrm{on}\ \mathbb{S}^{n-1}.
        \end{cases}
    \end{align}
    The key point is that \cref{eq. nabla Delta v} still holds in this case for a solution $v
    $ in \cref{eq. PDE of v_nonlinear}:
    \begin{align}\label{eq. 2nd nabla delta v}
        \nabla \Delta v
        =dv^{-1}
        \left(\nabla_{\nabla v}\nabla v-\frac{\Delta v}{d}\nabla v\right).
    \end{align}

    Hence by \cref{cor.div 1} we get \cref{eq. div 1_1-d}. Integrate this over $\mathbb{B}^n$ and use the boundary condition \cref{eq. PDE of v_nonlinear}, we get
    \begin{align}\label{eq. 1_p}
        &\int_{\mathbb{B}^n}v^{1-d}\left(
        \left|\nabla^2 v-\frac{\Delta v}{n}g\right|^2
        +\left(\frac{1}{n}-\frac{1}{d}\right)(\Delta v)^2
        \right)\notag\\
        =&\int_{\mathbb{S}^{n-1}}f^{1-d}
        \left(\nabla^2 v-\frac{\Delta v}{d}g\right)(\nabla v,\nu)\notag
        =\int_{\mathbb{S}^{n-1}}f^{1-d}
        \left(\nabla^2 v-\frac{\Delta v}{d}g\right)(\nabla f+\chi\nu,\nu)\\
        =&\int_{\mathbb{S}^{n-1}}f^{1-d}(\langle \nabla f,\nabla \chi\rangle-|\nabla f|^2)
        +\int_{\mathbb{S}^{n-1}}\chi f^{1-d}
        \left(\nabla^2 v-\frac{\Delta v}{d}g\right)(\nu,\nu)\notag\\
        =&\left(\frac{2}{d-2}\lambda-1\right)
        \int_{\mathbb{S}^{n-1}}f^{1-d}|\nabla f|^2
        +\frac{2}{d-2}\lambda
        \int_{\mathbb{S}^{n-1}}f^{2-d}
        \left(\nabla^2 v-\frac{\Delta v}{d}g\right)(\nu,\nu)\notag\\
        &-\frac{n-2}{d-2}\frac{H}{n-1}
        \int_{\mathbb{S}^{n-1}}f^{1-d}
        \left(\nabla^2 v-\frac{\Delta v}{d}g\right)(\nu,\nu).
    \end{align}
    
\textbf{Step 2:} Take $w:=\frac{1-|x|^2}{2}$. Then $w$ satisfies: $w|_{\mathbb{S}^{n-1}}=0,\ \nabla w|_{\mathbb{S}^{n-1}}=-\nu,\  \nabla^2 w=\frac{\Delta w}{n}g,\ \Delta w\equiv-n$. We shall use $w$ to tackle the last two terms in the right hand side of \cref{eq. 1_p}

Now use \cref{cor. div 1w}, \cref{eq. 2nd nabla delta v} and $\Delta w\equiv-n$, we derive \cref{eq. div 1w_2-d}. It follows from \cref{eq. div 1_1-d} and \cref{eq. div 1w_2-d} that
\begin{align*}
        &wv^{1-d}\left(
        \left|\nabla^2 v-\frac{\Delta v}{n}g\right|^2
        +\left(\frac{1}{n}-\frac{1}{d}\right)(\Delta v)^2
        \right)
        =w\mathrm{div}\left(
        v^{1-d}
        \left(\nabla_{\nabla v}\nabla v-\frac{\Delta v}{d}\nabla v\right)
        \right)\notag\\
        =&\mathrm{div}\left(
        wv^{1-d}
        \left(\nabla_{\nabla v}\nabla v-\frac{\Delta v}{d}\nabla v\right)
        \right)
        -v^{1-d}
        \left\langle\nabla_{\nabla v}\nabla v-\frac{\Delta v}{d}\nabla v,\nabla w\right\rangle\notag\\
        =&\mathrm{div}\left(
        wv^{1-d}
        \left(\nabla_{\nabla v}\nabla v-\frac{\Delta v}{d}\nabla v\right)
        \right)
        -\mathrm{div}\left(
        v^{2-d}
        \left(\nabla_{\nabla w}\nabla v-\frac{\Delta v}{d}\nabla w\right)
        \right)
        +\left(\frac{n}{d}-1\right)v^{2-d}\Delta v.
    \end{align*}
    Integrate this equation over $\mathbb{B}^n$ and notice that $\nabla w|_{\mathbb{S}^{n-1}}=-\nu$, we get \cref{eq. 1w_2-d}:
    \begin{align*}
        &\int_{\mathbb{B}^n}wv^{1-d}\left(
        \left|\nabla^2 v-\frac{\Delta v}{n}g\right|^2
        +\left(\frac{1}{n}-\frac{1}{d}\right)(\Delta v)^2
        \right)\\
        =&\int_{\mathbb{S}^{n-1}}f^{2-d}
        \left(\nabla^2 v-\frac{\Delta v}{d}g\right)(\nu,\nu)
        +\left(\frac{n}{d}-1\right)
        \int_{\mathbb{B}^n}v^{2-d}\Delta v.
    \end{align*}

    Once again, \cref{eq. 2nd nabla delta v} and \cref{cor. div 1w} implies
    \begin{align*}
        \mathrm{div}\left(
        v^{1-d}
        \left(\nabla_{\nabla w}\nabla v-\frac{\Delta v}{d}\nabla w\right)
        \right)
        =\left(\frac{n}{d}-1\right)v^{1-d}\Delta v.
    \end{align*}
    Integrate this equation over $\mathbb{B}^n$ and notice that $\nabla w|_{\mathbb{S}^{n-1}}=-\nu$, we get \cref{eq. 1w_1-d}:
    \begin{align*}
        \int_{\mathbb{S}^{n-1}}f^{1-d}
        \left(\nabla^2 v-\frac{\Delta v}{d}g\right)(\nu,\nu)
        =\left(1-\frac{n}{d}\right)
        \int_{\mathbb{B}^n}v^{1-d}\Delta v.
    \end{align*}

    Plug \cref{eq. 1w_2-d} and \cref{eq. 1w_1-d} into \cref{eq. 1_p}, we derive the following key integral identity:
    \begin{align}\label{eq. final integral equation}
    0\leq&\int_{\mathbb{B}^n}
    \left(1-\frac{2}{d-2}\lambda w\right)
    v^{1-d}\left(
        \left|\nabla^2 v-\frac{\Delta v}{n}g\right|^2
        +\left(\frac{1}{n}-\frac{1}{d}\right)(\Delta v)^2
        \right)\notag\\
        =&\left(\frac{2}{d-2}\lambda-1\right)\int_{\mathbb{S}^{n-1}}f^{1-d}|\nabla f|^2
        +\frac{2}{d-2}\left(\frac{n}{d}-1\right)\left(
        \frac{n-2}{2}\frac{H}{n-1}\int_{\mathbb{B}^n}v^{1-d}\Delta v-\lambda \int_{\mathbb{B}^n}v^{2-d}\Delta v
        \right).
\end{align}

\textbf{Step 3:} Now we verify the first case in \cref{thm. C.}.

If $1<p\leq \frac{n+2}{n-2}$ and $\lambda=0$, then \cref{eq. final integral equation} reduces to 
\begin{align*}
    0\leq& \int_{\mathbb{B}^n}v^{1-d}\left(
        \left|\nabla^2 v-\frac{\Delta v}{n}g\right|^2
        +\left(\frac{1}{n}-\frac{1}{d}\right)(\Delta v)^2
        \right)\notag\\
        =&-\int_{\mathbb{S}^{n-1}}f^{1-d}|\nabla f|^2
        +\frac{n-2}{d-2}\frac{H}{n-1}
        \left(\frac{n}{d}-1\right)
        \int_{\mathbb{B}^n}v^{1-d}\Delta v\\
        \leq&0.
\end{align*}
This forces 
\begin{align*}
    \nabla^2v=\frac{\Delta v}{n}g \quad\mathrm{in} \ \mathbb{B}^n,
\end{align*}
and $v|_{\mathbb{S}^{n-1}}=constant$. It follows from Ricci's identity that
\begin{align*}
    \nabla \Delta v
    =\mathrm{div}(\nabla^2v)
    =\mathrm{div}\left(\frac{\Delta v}{n}g\right)
    =\frac{\nabla \Delta v}{n}.
\end{align*}
Therefore, $\Delta v$ is a constant in $\mathbb{B}^n$. Combining with $v|_{\mathbb{S}^{n-1}}=constant$, we could set $v(x)=r|x|^2+s$ for some constants $r,s$. The equation \cref{eq. PDE of v_nonlinear} reduces to
\begin{align*}
    \begin{cases}
        2nr^2|x|^2+2nrs=2dr^2|x|^2+\frac{n-2}{2(d-2)}\frac{R}{n-1} &\mathrm{in}\ \mathbb{B}^n, \\
        2r=-\frac{n-2}{d-2}\frac{H}{n-1} &\mathrm{on}\ \mathbb{S}^{n-1}.
    \end{cases}
\end{align*}
If follows that $d=n$ and 
\begin{align*}
\begin{cases}
    rs=\frac{1}{4n}\frac{R}{n-1},\\
    r=-\frac{1}{2}\frac{H}{n-1}.
\end{cases}
\end{align*}
If $H=0$, then $r=0$ and $R=0$. It's a contradiction.

If $H>0$, then $r=-\frac{1}{2}\frac{H}{n-1}<0, s=-\frac{1}{2n}\frac{R}{H}<0$. This contradicts with the fact that $v>0$ on $\mathbb{B}^n$.

In conclusion, there is no positive solution in this case, and the only nonnegative solution of \cref{eq. general equation of u-p} is $u\equiv 0$.

\textbf{Step 4:} If $p=\frac{n+2}{n-2}$ and  $0<\lambda< \frac{2}{p-1}=\frac{n-2}{2}$, then \cref{eq. final integral equation} reduces to
\begin{align*}
    0\leq&\int_{\mathbb{B}^n}
    \left(1-\frac{2}{n-2}\lambda w\right)
    v^{1-n}
        \left|\nabla^2 v-\frac{\Delta v}{n}g\right|^2
        \notag
        =\left(\frac{2}{n-2}\lambda-1\right)
        \int_{\mathbb{S}^{n-1}}f^{1-n}|\nabla f|^2\leq 0.
\end{align*}
This forces
\begin{align*}
    \nabla^2v=\frac{\Delta v}{n}g \quad\mathrm{in} \ \mathbb{B}^n,
\end{align*}
and $v|_{\mathbb{S}^{n-1}}=constant$. As before, one could derive that $\Delta v$ is a constant in $\mathbb{B}^n$ and set $v(x)=r|x|^2+s$ for some constants $r,s$. Then it follows from the equation \cref{eq. PDE of v_nonlinear} that
\begin{align*}
    \begin{cases}
        4rs=\frac{R}{n(n-1)},\\
        r=\frac{\lambda}{n-2}(r+s)-\frac{H}{2(n-1)}.
    \end{cases}
\end{align*}
Since $v$ is positive, we have $s>0$ and we solve
\begin{align*}
    r=\frac{1}{2\epsilon}\sqrt{\frac{R}{n(n-1)}},\quad 
    s=\frac{\epsilon}{2}\sqrt{\frac{R}{n(n-1)}},
\end{align*}
where $0<\epsilon=\sqrt{\frac{n(n-1)}{R}}
            \left(
        \frac{n-2}{2\lambda}\frac{H}{n-1}+\sqrt{(\frac{n-2}{2\lambda})^2(\frac{H}{n-1})^2+\frac{R}{n(n-1)}(\frac{n-2}{\lambda}-1)}
            \right)$.

In conclusion, $v(x)=\sqrt{\frac{R}{n(n-1)}}
\left(\frac{1}{2\epsilon}|x|^2+\frac{\epsilon}{2}\right)$ and $u=v^{-\frac{n-2}{2}}$ is the desired solution.

\textbf{Step 5:} If $p=\frac{n+2}{n-2}$ and $\lambda=\frac{n-2}{2}$, then \cref{eq. final integral equation} reduces to
    \begin{align*}
        0\leq&\int_{\mathbb{B}^n}
        \left(1-\frac{2}{n-2}\lambda w\right)
        v^{1-n}
        \left|\nabla^2 v-\frac{\Delta v}{n}g\right|^2
        \notag
        =\left(\frac{2}{n-2}\lambda-1\right)
        \int_{\mathbb{S}^{n-1}}f^{1-n}|\nabla f|^2= 0.
    \end{align*}
    As before, we could derive that $\Delta v$ is a constant in $\mathbb{B}^n$ (but $v|_{\mathbb{S}^{n-1}}$ is not necessarily constant) and set $v(x)=r|x|^2+\langle\xi,x\rangle+s$, where $r,s$ are constants and $\xi\in\mathbb{R}^n$. It follows from the equation \cref{eq. PDE of v_nonlinear} that
    \begin{align*}
        \begin{cases}
            4rs=|\xi|^2+\frac{R}{n(n-1)},\\
            r=s-\frac{H}{n-1}.
        \end{cases}
    \end{align*}
    Since $v$ is positive, we have $s>0$ and we solve
    \begin{align*}
    \begin{cases}
        r=\frac{1}{2}\left(
        \sqrt{(\frac{H}{n-1})^2+\frac{R}{n(n-1)}+|\xi|^2}
        -\frac{H}{n-1}
        \right),\\
        s=\frac{1}{2}\left(
        \sqrt{(\frac{H}{n-1})^2+\frac{R}{n(n-1)}+|\xi|^2}
        +\frac{H}{n-1}
        \right).
    \end{cases}
    \end{align*}    
    For the fixed $\xi\in \mathbb{R}^n$, there exists a unique $a\in\mathbb{B}^n$ such that
    \begin{align*}
        \xi=2\sqrt{\left(\frac{H}{n-1}\right)^2+\frac{R}{n(n-1)}}\,\frac{a}{1-|a|^2}.
    \end{align*}
    Hence we derive that
    \begin{align*}
        \begin{cases}
            r=\frac{1}{2}
            \left(
            \frac{1+|a|^2}{1-|a|^2}\sqrt{(\frac{H}{n-1})^2+\frac{R}{n(n-1)}}
            -\frac{H}{n-1}
            \right)
            =\sqrt{\frac{R}{n(n-1)}}\,
            \frac{1+\epsilon^2|a|^2}{2\epsilon(1-|a|^2)},\\
            s=\frac{1}{2}
            \left(
            \frac{1+|a|^2}{1-|a|^2}\sqrt{(\frac{H}{n-1})^2+\frac{R}{n(n-1)}}
            +\frac{H}{n-1}
            \right)
            =\sqrt{\frac{R}{n(n-1)}}\,
            \frac{\epsilon^2+|a|^2}{2\epsilon(1-|a|^2)},\\
            \xi=\sqrt{\frac{R}{n(n-1)}}
            \frac{(1+\epsilon^2)a}{\epsilon(1-|a|^2)},
        \end{cases}     
    \end{align*}
    where $0<\epsilon
            =\sqrt{\frac{n(n-1)}{R}}
            \left(\frac{H}{n-1}+\sqrt{(\frac{H}{n-1})^2+\frac{R}{n(n-1)}}\right)$.
            
In conclusion, $v(x)=\sqrt{\frac{R}{n(n-1)}}
\left(
\frac{1+\epsilon^2|a|^2}{2\epsilon(1-|a|^2)}|x|^2
+\frac{1+\epsilon^2}{\epsilon(1-|a|^2)}\langle x,a\rangle
+\frac{\epsilon^2+|a|^2}{2\epsilon(1-|a|^2)}
\right)$ and $u=v^{-\frac{n-2}{2}}$ is the desired solution.
\end{proof}



\printbibliography[heading=bibintoc]

\appendix
\section{}\label{appendix}

Bidaut-V\'eron and V\'eron \cite[Theorem 6.1]{BVV91} established the following uniqueness result:
\begin{theorem}[\cite{BVV91}]\label{thm.BVV91}
    Let $(M^n,g), n\geq 3$ be a complete, compact Riemannian manifold without boundary.  Assume $Ric\geq n-1$, and $1<q\leq\frac{n+2}{n-2}$, $0<\lambda\leq \frac{n}{q-1}$ are constants. Let $u\in C^2(M)$ be a positive solution of
    \begin{align}\label{eq. Main equaiton}
        -\Delta u+\lambda u=u^{q}\quad \mathrm{in~}M,
    \end{align}
    Then either $u$ is constant on $M$ or $q=\frac{n+2}{n-2},\,\lambda =\frac{n(n-2)}{4}$ and $(M^n,g)$ is isometric to $(\mathbb{S}^n,g_0)$. In the latter case, there holds
    \begin{align*}
        u(x)=\frac{c_n}{(\cosh t+(\sinh t)x\cdot\xi)^{\frac{n-2}{2}}}
    \end{align*}
    for some $\xi\in \mathbb{S}^{n}$ and some constant $t\geq 0$.
\end{theorem}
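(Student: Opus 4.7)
The plan is to adapt the P-function argument of Section 3 to the closed-manifold setting of BVV, with the Ricci lower bound $\mathrm{Ric}\geq(n-1)g$ playing the role that the strict boundary convexity $\Pi\geq 1$ did on $\mathbb{B}^n$. First I would set the intrinsic dimension $d:=\tfrac{2(q+1)}{q-1}$, so that $q\leq\tfrac{n+2}{n-2}$ becomes $d\geq n$, and let $v:=u^{-2/(d-2)}$. A direct calculation transforms \cref{eq. Main equaiton} into
\[
  \Delta v = \tfrac{d}{2}v^{-1}|\nabla v|^{2} + \tfrac{2}{d-2}v^{-1} - \tfrac{2\lambda}{d-2}v \quad\text{on } M,
\]
in parallel with the equation (3.1), the only novelty being the linear term $-\tfrac{2\lambda}{d-2}v$ arising from $\lambda u$. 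Differentiating yields the analog of the key pointwise identity
\[
  \nabla\Delta v = d v^{-1}\bigl(\nabla_{\nabla v}\nabla v-\tfrac{\Delta v}{d}\nabla v\bigr) - \tfrac{4\lambda}{d-2}\nabla v,
\]
and inserting this into \cref{cor.div 1} with the exponent $a=1-d$ makes the cross-terms involving $\langle\nabla_{\nabla v}\nabla v,\nabla v\rangle$ cancel, producing
\[
  \mathrm{div}\!\bigl(v^{1-d}\bigl(\nabla_{\nabla v}\nabla v-\tfrac{\Delta v}{d}\nabla v\bigr)\bigr) = v^{1-d}\Bigl[\bigl|\nabla^{2}v-\tfrac{\Delta v}{n}g\bigr|^{2} + \bigl(\tfrac{1}{n}-\tfrac{1}{d}\bigr)(\Delta v)^{2} + \mathrm{Ric}(\nabla v,\nabla v) - \tfrac{4(d-1)\lambda}{d(d-2)}|\nabla v|^{2}\Bigr].
\]

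Since $M$ has no boundary, integrating eliminates the divergence; applying $\mathrm{Ric}\geq(n-1)g$ then gives the central inequality
\[
  0 \;\geq\; \int_{M} v^{1-d}\Bigl[\bigl|\nabla^{2}v-\tfrac{\Delta v}{n}g\bigr|^{2} + \bigl(\tfrac{1}{n}-\tfrac{1}{d}\bigr)(\Delta v)^{2} + \bigl((n-1)-\tfrac{4(d-1)\lambda}{d(d-2)}\bigr)|\nabla v|^{2}\Bigr].
\]
In the doubly extremal case $d=n$, $\lambda=n(n-2)/4$, each coefficient on the right vanishes, forcing $\nabla^{2}v=(\Delta v/n)g$ together with pointwise equality $\mathrm{Ric}(\nabla v,\nabla v)=(n-1)|\nabla v|^{2}$. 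Tashiro's/Obata's rigidity theorem then yields $(M,g)\cong(S^{n},g_{0})$ with $v$ an affine function of a first spherical harmonic; reinserting into the equation for $v$ determines the constants and produces the stated family $u=c_{n}/(\cosh t+\sinh t\,x\cdot\xi)^{(n-2)/2}$.

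The main obstacle is the strictly subcritical regime $d>n$. Since $\tfrac{(n-1)d(d-2)}{4(d-1)}<\tfrac{n(d-2)}{4}$, the coefficient of $|\nabla v|^{2}$ in the central inequality can become negative as $\lambda$ approaches the admissible maximum $n(d-2)/4 = n/(q-1)$, so the displayed inequality alone is insufficient to conclude. To close this gap I would supplement it with the Lichnerowicz-type estimate $\int_{M}(\Delta v)^{2}\geq n\int_{M}|\nabla v|^{2}$ (available on any closed $(M,g)$ with $\mathrm{Ric}\geq(n-1)g$), possibly in a suitably weighted form, together with Pohozaev-type identities obtained by testing the equation for $v$ against powers $v^{k}$ and using integration by parts on the closed manifold. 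These should allow the positive $(\tfrac{1}{n}-\tfrac{1}{d})(\Delta v)^{2}$ term to absorb the negative $|\nabla v|^{2}$ contribution throughout the admissible range of $\lambda$; once this absorption is carried out, the integrand is forced to vanish, yielding $\nabla v\equiv 0$ and hence $u$ constant.
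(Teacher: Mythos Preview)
Your strategy is the same as the paper's: the substitution $v=u^{-2/(d-2)}$, the divergence identity with weight $v^{1-d}$, and the resulting central inequality all coincide (the paper packages the same vector field as $v^{2-d}\nabla\phi$ with $\phi=\Delta v+\tfrac{4\lambda}{d-2}v$, which equals $d\cdot v^{1-d}(\nabla_{\nabla v}\nabla v-\tfrac{\Delta v}{d}\nabla v)$). You have also correctly located the only obstacle in the subcritical regime.

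The gap is that the ``suitably weighted'' Lichnerowicz inequality you invoke is the entire remaining content of the proof, and it does not follow from the unweighted estimate $\int(\Delta v)^2\geq n\int|\nabla v|^2$ or from Pohozaev identities in $v^k$ alone. What is actually needed is
\[
\int_M v^{1-d}\Bigl|\nabla^2 v-\tfrac{\Delta v}{n}g\Bigr|^2+\tfrac{1}{n}\int_M v^{1-d}(\Delta v)^2\ \geq\ \int_M v^{1-d}|\nabla v|^2,
\]
with the trace-free Hessian term genuinely required on the left. Once this holds, split the coefficient $1$ in front of $|\nabla^2 v-\tfrac{\Delta v}{n}g|^2$ as $\tfrac{d-n}{d}+\tfrac{n}{d}$, apply the inequality to the $\tfrac{d-n}{d}$ portion, and the $|\nabla v|^2$ coefficient becomes $\tfrac{d-1}{d}\bigl(n-\tfrac{4\lambda}{d-2}\bigr)\geq 0$, closing the argument exactly at $\lambda=\tfrac{n}{q-1}$. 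The paper isolates this as a separate proposition (valid for weights $v^s$ with $s\leq 0$) and proves it by the change of variable $u=v^{(s+2)/2}$, which reduces the weighted estimate to the unweighted Bochner/Lichnerowicz identity for $u$ plus a nonnegative remainder of the form $c(s)\int v^{s-2}|\nabla v|^4$; checking $c(s)\geq 0$ for $s\leq 0$ is a short computation. Your proposal would be complete once you state and prove this weighted inequality; the Pohozaev-type testing you mention is not needed beyond that.
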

For the critical power case (i.e. $q=\frac{n+2}{n-2}$), the solution could be classified via a conceptually simple strategy as follows. By carefully analyzing the model solution, Wang \cite[Section 2]{Wan22} come up with an appropriate function $\phi$, known as a P-function, which is constant if  and only if the solution $u$ is given by the model case. Then the Bochner formula implies that, up to a first order term, $\phi$ is a subharmonic function. Hence the maximum principle could be applied to show that $\phi$ must be constant and the proof finishes.

In the subcritical power case (i.e. $1<q<\frac{n+2}{n-2}$), as pointed out by Wang \cite[Section 5]{Wan22}, the choice of parameters (and the P-function) is delicate if one wishes to use the same strategy. In the following, we shall utilize our principle used in our proof of \cref{thm. GL25} to present the appropriate P-function and prove \cref{thm.BVV91} in its full generality.

The idea of the proof is as follows. As in our proof in \cref{sec3}, to handle the subcritical power case, we introduce the intrinsic dimension $d$ such that $q=\frac{d+2}{d-2}$. Then we mimic Wang's proof of the critical power case \cite{Wan22} and finally use an integral inequality (\cref{prop.weak perturbed eigenvalue estimate}) to treat the emerging terms in the subcritical case and conclude the results.

We note that \cref{prop.weak perturbed eigenvalue estimate} is crucial for the \emph{sharp range of $\lambda$} in the subcritical power case. Moreover, it also has its independent interest.
\begin{proposition}\label{prop.weak perturbed eigenvalue estimate}
        Let $(M^n,g)$ be a compact manifold without boundary, satisfying $\mathrm{Ric}\geq (n-1)k\geq0$, then for any constant $s\in(-\infty,0]\cup [\frac{4(n+2)}{4n-1},+\infty)$, there holds 
        \begin{align}\label{weak perturbed eigenvalue estimate}
        &\int_M 
        \left|\nabla^2 v-\frac{\Delta v}{n}g\right|^2 v^{s}
        +\frac{1}{n}\int_M (\Delta v)^2v^{s}
        \geq k\int_M |\nabla v|^2v^{s},\quad \forall\  0<v\in C^{\infty}(M).
    \end{align}
    Moreover, if \cref{weak perturbed eigenvalue estimate} holds for some $s\in (-\infty,0)\cup (\frac{4(n+2)}{4n-1},+\infty)$ and some $0<v\in C^{\infty}(M)$, then $v$ is a constant function.
\end{proposition}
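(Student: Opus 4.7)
The plan is to combine three ingredients: the weighted Bochner identity with weight $v^s$, the classical Lichnerowicz inequality applied to the auxiliary function $w=v^{(s+2)/2}$, and a sharp Kato-type Cauchy--Schwarz bound that exploits the trace-free part of the Hessian. First, I would multiply the Bochner formula $\tfrac{1}{2}\Delta|\nabla v|^2 = |\nabla^2 v|^2 + \langle\nabla v,\nabla\Delta v\rangle + \mathrm{Ric}(\nabla v,\nabla v)$ by $v^s$, integrate over the closed manifold $M$, and repeatedly integrate by parts to arrive at the identity
\[
\int_M v^s|\nabla^2 v|^2 + \int_M v^s\mathrm{Ric}(\nabla v,\nabla v) = X + \tfrac{3s}{2}Y + \tfrac{s(s-1)}{2}Z,
\]
where I use the shorthand $X=\int_M v^s(\Delta v)^2$, $Y=\int_M v^{s-1}|\nabla v|^2\Delta v$, $Z=\int_M v^{s-2}|\nabla v|^4$, $W=\int_M v^s|\nabla v|^2$, and $T=\int_M v^s|\nabla^2 v - \tfrac{\Delta v}{n}g|^2$. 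In this notation the target inequality is equivalent to $T + X/n \geq kW$.

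Next, I would apply classical Lichnerowicz $\int(\Delta w)^2 \geq nk\int|\nabla w|^2$ to the test function $w=v^{(s+2)/2}$; expanding $\Delta w$ and $|\nabla w|^2$ in terms of $v$ converts this into the weighted Lichnerowicz estimate $X + sY + \tfrac{s^2}{4}Z \geq nkW$. For the third ingredient, I would invoke the sharp pointwise algebraic fact that any trace-free symmetric $2$-tensor $S$ and any vector $X_0$ satisfy $|S(X_0,X_0)|^2 \leq \tfrac{n-1}{n}|S|^2|X_0|^4$, applied with $S=\nabla^2 v - \tfrac{\Delta v}{n}g$ and $X_0=\nabla v$. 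Combining this with Cauchy--Schwarz on the weighted measure $v^{s-1}\,d\mathrm{vol}$ produces the lower bound $T \geq \tfrac{n}{n-1}\cdot P^2/Z$, where $P = \int_M v^{s-1}\bigl(\nabla^2 v(\nabla v,\nabla v) - \tfrac{\Delta v}{n}|\nabla v|^2\bigr)$; an integration by parts expresses $P$ as an explicit linear combination of $Y$ and $Z$.

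With the three inputs in hand, invoking $\mathrm{Ric}(\nabla v,\nabla v)\geq(n-1)k|\nabla v|^2$ in the Bochner identity eliminates the Ricci term, leaving a purely algebraic inequality in the scalars $T,X,Y,Z,W$ that should be combined with the weighted Lichnerowicz estimate and the Cauchy--Schwarz bound on $T$. Substituting the lower bound on $T$ and using the weighted Lichnerowicz estimate to control $X$ from below, the problem reduces to the nonnegativity of a specific quadratic form in the ratio $r:=Y/Z$. A direct calculation shows that the discriminant of this quadratic is nonpositive exactly when $s \leq 0$ or $s \geq \tfrac{4(n+2)}{4n-1}$, which matches the claimed admissible range. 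The main technical hurdle will be executing the combination so that the sharp threshold $\tfrac{4(n+2)}{4n-1}$ emerges rather than a weaker one; pinning it down requires balancing the three inequalities with exactly the right weights, since a naive combination produces only a coarser threshold.

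For the rigidity part, equality in the integral inequality for some $s$ in the open intervals $(-\infty,0)\cup(\tfrac{4(n+2)}{4n-1},+\infty)$ forces equality simultaneously in all three ingredients: equality in the sharp Kato bound pins $\nabla^2 v - \tfrac{\Delta v}{n}g$ to be pointwise proportional to the trace-free part of $dv\otimes dv$ divided by $v$ on $\{\nabla v \neq 0\}$; equality in Lichnerowicz forces $v^{(s+2)/2}$ to be a first eigenfunction of $-\Delta$; and equality in the Ricci bound forces $\mathrm{Ric}(\nabla v,\nabla v) = (n-1)k|\nabla v|^2$. For $s$ strictly in the open intervals, these three conditions turn out to be mutually incompatible unless $\nabla v\equiv 0$, whence $v$ must be constant; the strict interior of the discriminant condition provides the slack that rules out the non-constant extremals which are only permitted at the boundary values $s = 0$ and $s = \tfrac{4(n+2)}{4n-1}$.
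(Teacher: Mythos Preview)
Your three ingredients are all correct as stated, but the specific combination you outline---bounding $X$ from below via the weighted Lichnerowicz inequality and $T$ from below via Kato/Cauchy--Schwarz---does \emph{not} produce the sharp threshold. Carrying out that combination explicitly, one needs
\[
\frac{n}{n-1}\frac{P^2}{Z}-\frac{s}{n}Y-\frac{s^2}{4n}Z\geq 0,
\]
and the discriminant of the resulting quadratic in $r=Y/Z$ vanishes at $s=0$ and $s=\tfrac{4(n+2)}{3n}$, not at $\tfrac{4(n+2)}{4n-1}$. The loss occurs because Lichnerowicz for $w=v^{(s+2)/2}$ discards the term $\int_M|\nabla^2 w-\tfrac{\Delta w}{n}g|^2$, and that term is precisely a combination of $T$, $P$, and $Z$; throwing it away and then separately invoking Kato on $T$ double-counts the slack. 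The ``balancing with the right weights'' you anticipate cannot recover this, since the Lichnerowicz input is strictly weaker than your Bochner identity (they come from the same exact formula, with Lichnerowicz having already dropped a nonnegative term).

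The fix is to drop Lichnerowicz entirely: use your weighted Bochner identity to solve for $X$ exactly, obtaining
\[
T+\frac{X}{n}=\frac{n}{n-1}T+\frac{1}{n-1}\int_M v^s\mathrm{Ric}(\nabla v,\nabla v)-\frac{3s}{2(n-1)}Y-\frac{s(s-1)}{2(n-1)}Z,
\]
then apply the Ricci lower bound and finally your Kato estimate $T\geq\tfrac{n}{n-1}P^2/Z$. The resulting quadratic in $r$ now has discriminant proportional to $s\bigl(4(n+2)-(4n-1)s\bigr)$, giving the sharp range. Rigidity is then immediate: for $s$ in the open intervals the quadratic is strictly positive, so equality forces $Z=0$ and $v$ is constant---none of the eigenfunction analysis you sketch is needed.

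The paper proceeds differently: it substitutes $u=v^{(s+2)/2}$ and, using two exact integration-by-parts identities (Bochner for $u$, and an identity for $\int u^{-1}|\nabla u|^2\Delta u$), writes $T+X/n$ as an exact sum of a perfect square, the term $\tfrac{4n-1}{4(n+2)^2}s\bigl(s-\tfrac{4(n+2)}{4n-1}\bigr)Z$, and $\tfrac{1}{n-1}\int v^s\mathrm{Ric}$. No Cauchy--Schwarz or Kato inequality is used; the only inequality is the Ricci lower bound at the very end. Your (corrected) route via Kato is equivalent in content---completing the square and Cauchy--Schwarz are two sides of the same coin---but the paper's exact identity makes both the threshold and the rigidity transparent without any discriminant bookkeeping.
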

We shall postpone the proof of \cref{prop.weak perturbed eigenvalue estimate} and prove \cref{thm.BVV91} right now.
\begin{proof}[Proof of \cref{thm.BVV91}]
    Let $d:=\frac{2(q+1)}{q-1}$. Then $q\leq\frac{n+2}{n-2}\Leftrightarrow d\geq n$, and $\lambda \leq \frac{n}{q-1}\Leftrightarrow \lambda\leq \frac{n(d-2)}{4}$. Define $v:=u^{-\frac{q-1}{2}}=u^{-\frac{2}{d-2}}$. A straightforward calculation gives
    \begin{align*}
        \Delta v=\frac{d}{2}v^{-1}|\nabla v|^2+\frac{2}{d-2}v^{-1}-\frac{2\lambda}{d-2}v.
    \end{align*}

    Consider the P-function 
    \begin{align*}
        \phi:=\Delta v+\frac{4\lambda}{d-2}v
    =v^{-1}\left(
    \frac{d}{2}|\nabla v|^2+\frac{2}{d-2}+\frac{2\lambda}{d-2}v^2
    \right).
    \end{align*}
    On the one hand, 
    \begin{align*}
    \Delta(v\phi)=v\Delta\phi+2\langle\nabla v,\nabla \phi\rangle+(\Delta v)^2+\frac{4\lambda}{d-2}v\Delta v.
    \end{align*}
    On the other hand, by Bochner formula, there holds
    \begin{align*}
        \Delta(v\phi)&=d
        \left(
        |\nabla^2 v|^2
        +\left\langle\nabla v,\nabla \left(\phi-\frac{4\lambda}{d-2}v\right)\right\rangle+Ric(\nabla v,\nabla v)
        \right)
        +\frac{2\lambda}{d-2}(2|\nabla v|^2+2v\Delta v)\\
        &=d
        \left(
        \left|\nabla^2 v-\frac{\Delta v}{n}g\right|^2+\frac{(\Delta v)^2}{n}
        +\langle\nabla v,\nabla \phi\rangle
        +Ric(\nabla v,\nabla v)
        \right)
        -\frac{4(d-1)\lambda}{d-2}|\nabla v|^2
        +\frac{4\lambda}{d-2}v\Delta v.
    \end{align*}    
    We deduce that $\phi$ satisfies the following equation
    \begin{align*}
        v\Delta \phi+(2-d)\langle\nabla v,\nabla \phi\rangle
        =d\left|\nabla^2 v-\frac{\Delta v}{n}g\right|^2+(\frac{d}{n}-1)(\Delta v)^2+d\ \mathrm{Ric}(\nabla v,\nabla v)-\frac{4(d-1)\lambda}{d-2}|\nabla v|^2.
    \end{align*}
Multiply both sides of this equation by $v^{1-d}$ and use $Ric\geq n-1$,  we derive the following key inequality:
    \begin{align}\label{eq.key inequality}
        \mathrm{div}(v^{2-d}\nabla \phi)
        \geq& (d-n)
        \left|\nabla^2 v-\frac{\Delta v}{n}g\right|^2 v^{1-d}
        +\left(\frac{d}{n}-1\right)(\Delta v)^2v^{1-d}\notag\\
        &+\left(
        d(n-1)-\frac{4(d-1)\lambda}{d-2}
        \right)|\nabla v|^2v^{1-d}.
    \end{align}

    Now integrate \cref{eq.key inequality} over $M$ and use \cref{prop.weak perturbed eigenvalue estimate} (with parameter $s=1-d$), we get
    \begin{align}\label{eq. last step}
        0=\int_M div(v^{2-d}\nabla \phi)
        \geq\left(d-n+d(n-1)-\frac{4(d-1)\lambda}{d-2}\right)\int_M |\nabla v|^2v^{1-d}\geq 0,
    \end{align}
    since $\lambda \leq \frac{n(d-2)}{4}$. Therefore, equality holds in \cref{eq. last step}.
    
    For $1<q<\frac{n+2}{n-2}$, this forces that  the equality holds in \cref{prop.weak perturbed eigenvalue estimate}. Hence $v$ is a constant.

    For $0<\lambda<\frac{n}{q-1}$, \cref{eq. last step} implies that $v$ is a constant.

    For $q=\frac{n+2}{n-2}$ and $\lambda=\frac{n(n-2)}{4}$, we have $\nabla^2 v-\frac{\Delta v}{n}g\equiv 0$ and $\mathrm{Ric}(\nabla v,\nabla v)\equiv n-1$. It follows that $(M^n,g)$ is isometric to the round sphere $\mathbb{S}^n$.
\end{proof}

Finally we present the proof of \cref{prop.weak perturbed eigenvalue estimate} with the help of two auxiliary lemmas. 

\begin{lemma}\label{lem.laplace squared}
        Let $(M^n,g)$ be a compact manifold without boundary. Then for any $u\in C^{\infty}(M)$, there holds
        \begin{align*}
            \int_M(\Delta u)^2
            =&\frac{n}{n-1}\int_M
            \left|\nabla^2 u-\frac{\Delta u}{n}g\right|^2
            +\frac{n}{n-1}\int_M \mathrm{Ric}(\nabla u,\nabla u).
        \end{align*}
    \end{lemma}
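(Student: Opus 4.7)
The plan is to derive this identity directly from the Bochner formula (equivalently, from \cref{cor.div 1} specialized to $d=n$), together with integration by parts on a closed manifold.

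First, I would invoke \cref{cor.div 1} with $v$ replaced by $u$ and the auxiliary parameter set to $d=n$. The coefficient $\frac{1}{n}-\frac{1}{d}$ in front of $(\Delta u)^2$ then vanishes, and the identity collapses to
\begin{equation*}
\mathrm{div}\left(\nabla_{\nabla u}\nabla u-\frac{\Delta u}{n}\nabla u\right)
=\left|\nabla^2 u-\frac{\Delta u}{n}g\right|^2
+\left(1-\frac{1}{n}\right)\langle\nabla\Delta u,\nabla u\rangle
+\mathrm{Ric}(\nabla u,\nabla u).
\end{equation*}

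Next, I would integrate this identity over $M$. Since $\partial M=\emptyset$, the divergence theorem kills the left-hand side. For the cross term on the right, a single integration by parts gives $\int_M\langle\nabla\Delta u,\nabla u\rangle=-\int_M(\Delta u)^2$. Substituting this back, one obtains
\begin{equation*}
0=\int_M\left|\nabla^2 u-\frac{\Delta u}{n}g\right|^2
-\frac{n-1}{n}\int_M(\Delta u)^2
+\int_M\mathrm{Ric}(\nabla u,\nabla u),
\end{equation*}
and rearranging together with multiplication by $\frac{n}{n-1}$ yields the desired formula.

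There is no real obstacle here; the identity is a classical consequence of the Bochner formula, and the only minor bookkeeping is choosing $d=n$ in \cref{cor.div 1} so that the $(\Delta u)^2$ term on the right-hand side of that lemma disappears, leaving the single integration by parts to produce the $(\Delta u)^2$ that appears on the left of the stated identity.
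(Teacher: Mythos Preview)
Your proof is correct and essentially the same as the paper's: both start from the Bochner formula (you via \cref{cor.div 1} with $d=n$, the paper by writing the Bochner identity directly), integrate over the closed manifold, and use the single integration by parts $\int_M\langle\nabla\Delta u,\nabla u\rangle=-\int_M(\Delta u)^2$ before rearranging. The only cosmetic difference is that you package the Bochner formula into the divergence form of \cref{cor.div 1} first, whereas the paper applies it as $\frac12\Delta|\nabla u|^2=\cdots$.
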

    \begin{proof}
        By Bochner formula, 
        \begin{align*}
            \frac{1}{2}\Delta |\nabla u|^2=\left|\nabla^2 u-\frac{\Delta u}{n}g\right|^2
            +\frac{1}{n}(\Delta u)^2
            +\langle\nabla \Delta u,\nabla u\rangle
            +\mathrm{Ric}(\nabla u,\nabla u).
        \end{align*}
        Integrate it over M, we get
        \begin{align*}
           0=\int_M\left(\left|\nabla^2 u-\frac{\Delta u}{n}g\right|^2+\mathrm{Ric}(\nabla u,\nabla u)\right)
            +\frac{1}{n}\int_M(\Delta u)^2
            +\int_M (\mathrm{div}(\Delta u\nabla u)-(\Delta u)^2).
        \end{align*}
        Rearrange it and the proof finishes.
    \end{proof}

    \begin{lemma}\label{lem.grad squared laplace}
        Let $(M^n,g)$ be a compact manifold without boundary. If $u\in C^{\infty}(M)$ is a positive function, then
        \begin{align*}
            \int_M u^{-1}|\nabla u|^2\Delta u
            =&\frac{n}{n+2}\int_M u^{-2}|\nabla u|^4
            -\frac{2n}{n+2}\int_M \left\langle\nabla^2 u-\frac{\Delta u}{n}g,\frac{du\otimes du}{u}-\frac{1}{n}\frac{|\nabla u|^2}{u}g\right\rangle.
        \end{align*}
    \end{lemma}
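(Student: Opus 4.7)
The plan is to prove the identity by a single integration by parts followed by an algebraic rearrangement, exploiting the fact that both sides become linearly related after we express the scalar $\nabla^2 u(\nabla u,\nabla u)/u$ via the trace-free Hessian.

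First I would write
\begin{align*}
    \int_M u^{-1}|\nabla u|^2\Delta u
    = -\int_M \langle \nabla(u^{-1}|\nabla u|^2),\nabla u\rangle
    = \int_M u^{-2}|\nabla u|^4 - \int_M u^{-1}\langle \nabla|\nabla u|^2,\nabla u\rangle,
\end{align*}
and then use $\tfrac12\nabla|\nabla u|^2 = \nabla^2 u(\nabla u,\cdot)$ to rewrite the last integrand as $2u^{-1}\nabla^2 u(\nabla u,\nabla u)$. So after this first step, the left-hand side equals
\begin{align*}
    \int_M u^{-2}|\nabla u|^4 - 2\int_M u^{-1}\nabla^2 u(\nabla u,\nabla u).
\end{align*}

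Next I would expand the pairing on the right-hand side of the claimed identity. Using $\langle g,du\otimes du\rangle = |\nabla u|^2$ and $\langle g,g\rangle = n$, a direct computation gives
\begin{align*}
    \left\langle \nabla^2 u - \tfrac{\Delta u}{n}g,\ \tfrac{du\otimes du}{u} - \tfrac{|\nabla u|^2}{n u}g\right\rangle
    = \frac{\nabla^2 u(\nabla u,\nabla u)}{u} - \frac{|\nabla u|^2 \Delta u}{n u}.
\end{align*}
Integrating and rearranging, this expresses $\int_M u^{-1}\nabla^2 u(\nabla u,\nabla u)$ as the trace-free pairing plus $\frac{1}{n}\int_M u^{-1}|\nabla u|^2\Delta u$.

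Plugging this into the formula from the first step produces a linear equation for the quantity $I := \int_M u^{-1}|\nabla u|^2\Delta u$ of the shape $I = \int u^{-2}|\nabla u|^4 - 2(\text{trace-free pairing}) - \tfrac{2}{n}I$, i.e.\ $\tfrac{n+2}{n}I$ equals the right-hand side without the prefactor. Solving yields the stated identity with the prefactor $\tfrac{n}{n+2}$. The only subtlety is the bookkeeping in the trace-free expansion, but this is routine and no boundary terms arise since $\partial M=\emptyset$; there is no conceptual obstacle.
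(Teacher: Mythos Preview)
Your proposal is correct and is essentially the same argument as the paper's: one integration by parts to reach $\int_M u^{-2}|\nabla u|^4 - 2\int_M u^{-1}\nabla^2 u(\nabla u,\nabla u)$, followed by the trace-free decomposition $u^{-1}\nabla^2 u(\nabla u,\nabla u)=\langle\nabla^2 u-\tfrac{\Delta u}{n}g,\ \tfrac{du\otimes du}{u}-\tfrac{|\nabla u|^2}{nu}g\rangle+\tfrac{1}{n}u^{-1}|\nabla u|^2\Delta u$, and then solving the resulting linear equation for $I$. The paper phrases the first step via the divergence $\mathrm{div}(u^{-1}|\nabla u|^2\nabla u)$ rather than writing $-\int\langle\nabla(\cdot),\nabla u\rangle$, but the content is identical.
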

    \begin{proof}
        By divergence theorem,
        \begin{align*}
            \int_M u^{-1}|\nabla u|^2\Delta u
            =&\int_M\mathrm{div}(u^{-1}|\nabla u|^2\nabla u)-(2u^{-1}\langle\nabla_{\nabla u}\nabla u,\nabla u\rangle-u^{-2}|\nabla u|^4)\\
            =&\int_M u^{-2}|\nabla u|^4
            -2\int_M \left\langle\nabla ^2u,\frac{du\otimes du}{u}-\frac{1}{n}\frac{|\nabla u|^2}{u}g\right\rangle
            -\frac{2}{n}\int_M u^{-1}|\nabla u|^2\Delta u.
        \end{align*}
        Rearrange it and the proof finishes.
    \end{proof}

\begin{proof}[Proof of \cref{prop.weak perturbed eigenvalue estimate}]
        First assume that $s\in(-\infty,-2)\cup(-2,0]\cup [\frac{4(n+2)}{4n-1},+\infty)$. Let $t:=s+2\in(-\infty,0)\cup(0,2]\cup[\frac{6(2n+1)}{4n-1},+\infty)$ and $u:=v^{\frac{t}{2}}$. Then we have
    \begin{align*}
        v^{\frac{s}{2}}&=u^{-\frac{2}{t}+1},\\
        v^{\frac{s}{2}}(\nabla^2 v-\frac{\Delta v}{n}g)
    &=\frac{2}{t}
    \left(\nabla^2 u-\frac{\Delta u}{n}g\right)
    +\frac{2}{t}\left(\frac{2}{t}-1\right)
    \left(\frac{du\otimes du}{u}-\frac{1}{n}\frac{|\nabla u|^2}{u}g\right),\\
    v^{\frac{s}{2}}\Delta v&=\frac{2}{t}
    \left(\frac{2}{t}-1\right)
    u^{-1}|\nabla u|^2+\frac{2}{t}\Delta u,\\
    v^{\frac{s}{2}}\nabla v&=\frac{2}{t}\nabla u.
    \end{align*}
    By \cref{lem.laplace squared} , \cref{lem.grad squared laplace} and the fact that $|\frac{du\otimes du}{u}-\frac{1}{n}\frac{|\nabla u|^2}{u}g|^2=\frac{n-1}{n}u^{-2}|\nabla u|^4$, we have
    \begin{align*}
        &\int_M \left|\nabla^2 v-\frac{\Delta v}{n}g\right|^2 v^{s}
        +\frac{1}{n}\int_M (\Delta v)^2v^{s}\notag\\\notag
        =&\left(\frac{2}{t}\right)^2
        \int_M |\nabla^2 u-\frac{\Delta u}{n}g|^2
        +\left(\frac{2}{t}\right)^2
        \left(\frac{2}{t}-1\right)^2
        \frac{n-1}{n}
        \int_M u^{-2}|\nabla u|^4\\\notag
        &+2\left(\frac{2}{t}\right)^2
        \left(\frac{2}{t}-1\right)
        \int_M\left\langle\nabla^2 u-\frac{\Delta u}{n}g,\frac{du\otimes du}{u}-\frac{1}{n}\frac{|\nabla u|^2}{u}g\right\rangle\\\notag
        &+\frac{1}{n}
        \left(\frac{2}{t}\right)^2
        \left(\frac{2}{t}-1\right)^2
        \int_M u^{-2}|\nabla u|^4
        +\frac{1}{n}
        \left(\frac{2}{t}\right)^2\frac{n}{n-1}\int_M \left(\left|\nabla^2 u-\frac{\Delta u}{n}g\right|^2
        +\mathrm{Ric}(\nabla u,\nabla u)\right)\\\notag
        &+\frac{2}{n}\left(\frac{2}{t}\right)^2
        \left(\frac{2}{t}-1\right)
        \frac{n}{n+2}\int_M u^{-2}|\nabla u|^4\notag\\
        &-\frac{2}{n}
        \left(\frac{2}{t}\right)^2
        \left(\frac{2}{t}-1\right)
        \frac{2n}{n+2}\int_M \left\langle\nabla^2 u-\frac{\Delta u}{n}g,\frac{du\otimes du}{u}-\frac{1}{n}\frac{|\nabla u|^2}{u}g\right\rangle\\\notag
        =&\frac{n}{n-1}
        \left(\frac{2}{t}\right)^2
        \int_M \left|\nabla^2 u-\frac{\Delta u}{n}g\right|^2
        +\frac{2n}{n+2}
        \left(\frac{2}{t}\right)^2
        \left(\frac{2}{t}-1\right)
        \int_M \left\langle\nabla^2 u-\frac{\Delta u}{n}g,\frac{du\otimes du}{u}-\frac{1}{n}\frac{|\nabla u|^2}{u}g\right\rangle\\\notag
        &+\left(\left(\frac{2}{t}\right)^2\left(\frac{2}{t}-1\right)^2
        +\frac{2}{n+2}\left(\frac{2}{t}\right)^2
        \left(\frac{2}{t}-1\right)
        \right)
        \int_M u^{-2}|\nabla u|^4
        +\frac{1}{n-1}\left(\frac{2}{t}\right)^2
        \int_M \mathrm{Ric}(\nabla u,\nabla u)\\\notag
        =&\frac{n}{n-1}
        \left(\frac{2}{t}\right)^2\int_M\left|\nabla ^2u-\frac{\Delta u}{n}g
        +\left(\frac{2}{t}-1\right)\frac{n-1}{n+2}
        \left(\frac{du\otimes du}{u}-\frac{1}{n}\frac{|\nabla u|^2}{u}g\right)
        \right|^2\\\notag
        &+\frac{3(2n+1)}{(n+2)^2} 
        \left(\frac{2}{t}\right)^2 \left(\frac{2}{t}-1\right)\left(\frac{2}{t}-\frac{4n-1}{3(2n+1)}\right)\int_M u^{-2}|\nabla u|^4
        +\frac{1}{n-1}
        \left(\frac{2}{t}\right)^2\int_M \mathrm{Ric}(\nabla u,\nabla u)\\\notag
        =&\frac{n}{n-1}
        \left(\frac{2}{t}\right)^2
        \int_M\left|\nabla ^2u-\frac{\Delta u}{n}g
        +\left(\frac{2}{t}-1\right)
        \frac{n-1}{n+2}
        \left(\frac{du\otimes du}{u}-\frac{1}{n}\frac{|\nabla u|^2}{u}g\right)
        \right|^2\\ \notag
        &+\frac{4n-1}{4(n+2)^2}s
        \left(s-\frac{4(n+2)}{4n-1}\right)
        \int_M v^{s-2}|\nabla v|^4
        +\frac{1}{n-1}\int_Mv^s\mathrm{Ric}(\nabla v,\nabla v)\notag\\
        \geq&\frac{4n-1}{4(n+2)^2}s
        \left(s-\frac{4(n+2)}{4n-1}\right)
        \int_M v^{s-2}|\nabla v|^4
        + k\int_M |\nabla v|^2v^{s}
        \geq k\int_M |\nabla v|^2v^{s}.
    \end{align*}
    Hence we have proved \cref{weak perturbed eigenvalue estimate} for $s\in (-\infty, -2)\cup(-2,0]\cup [\frac{4(n+2)}{4n-1},+\infty)$.
    
    The case $s=-2$ could be obtained by taking the limit $s\to -2$ in \cref{weak perturbed eigenvalue estimate}.
    \end{proof}    
\end{document}